\documentclass{article}

\usepackage[backend=bibtex,style=numeric,sorting=none,maxcitenames=2]{biblatex}
\usepackage{amsmath}
\usepackage{amssymb}
\usepackage{amsthm}
\usepackage{xfrac}
\usepackage{mathtools}
\usepackage{graphicx}
\usepackage{subcaption}
\usepackage{comment}
\usepackage{xcolor}
\usepackage{float}
\usepackage{hyperref}

\theoremstyle{plain}
\newtheorem{theorem}{Theorem}
\newtheorem{lemma}[theorem]{Lemma}
\newtheorem{corollary}{Corollary}[theorem]
\newtheorem{proposition}[theorem]{Proposition}
\theoremstyle{remark}
\newtheorem{remark}[theorem]{Remark}

\title{Exact barycentric rational interpolation of analytic functions with an application to the Bessel functions}
\author{Brett R. Green}
\begin{document}

	\maketitle
	
	\begin{abstract}
		We derive a method of analytic function expansion using barycentric rational interpolation, in which an arbitrary analytic weight function defines the interpolation and nodes. Truncating these expansions yields successive approximations, and the derivation of the expansions gives the approximation error as a byproduct. We demonstrate the method with cosine and the Bessel functions of the first kind for several choices of weight functions, and we examine the accuracy and convergence of the approximate interpolations as a function of the weight and the number of nodes used in the interpolation.
	\end{abstract}
	
	\section{Introduction}
	
	The most common function expansions for analytic functions, such as Taylor series and Pad\'{e} approximants, show that an analytic function can be approximated arbitrarily well with more and more information about its behavior at a single point.
	With complete information at that point, in the form of infinitely many terms, the expansions become exact.
	Conversely, it is also possible to produce approximations with limited information at each of many points, and to make these approximations exact by taking the limit of infinitely many points.
	Weierstrauss products expand functions as products of their zeros, providing one example of such a "nonlocal" function expansion.
	
	Another candidate for such an expansion and approximation comes from barycentric rational interpolation. An interpolation of this type is based on a set of $m$ nodes $z_k$ at which data is provided and generally has the form
	\begin{equation} \label{eq:barycentric-rational-interpolation-general-form}
		F(z) = \frac{ \sum_{k=1}^m r_k(z)p_k(z) }{ \sum_{k=1}^m r_k(z) } \, ,
	\end{equation}
	where each $r_k(z)$ is a rational function with a pole at $z_k$ and each $p_k(z)$ is a polynomial.\cite{floater_barycentric_2007,nakatsukasa_aaa_2018,berrut_recent_2005,berrut_recent_2014,mitchell_two_2025} $r_k(z)$ act as weighting functions which diverge at $z_k$ and thus anchor $f(z)$ to the value $f(z_k)=p_k(z_k)$ at $z_k$, hence the name "barycentric."
	These interpolations have proven advantageous in terms of stability and convergence with increasing density of nodes, but they have typically been explored in terms of interpolating arbitrary data rather than as a method of exact expansion.
	A method due to Berrut based on the theory of cardinal functions provides an exact barycentric expansion for interpolation in the special case of equally spaced nodes.\cite{whittaker_xviiifunctions_1915,berrut_barycentric_1989}
	The aim of this paper is to demonstrate a method of exact expansion using barycentric rational interpolation with more general node placement and weighting.
	When the resulting expansions are truncated to finitely many nodes, the same calculation used to derive the expansion also provides bounds on the error of the finite approximations.
	
	The core of the idea is to take the analytic function to be interpolated, $F(z)$, choose an analytic weight function, $G(z)$, and write
	\begin{equation} \label{eq:intro-F-G-split}
		F(z) = \frac{\sfrac{F(z)}{G(z)}}{\sfrac{1}{G(z)}} \, .
	\end{equation}
	The zeros of $G(z)$ become the nodes for this interpolation and we control their nature and placement through the choice of $G(z)$. We then render this expression in barycentric rational form by using residue theory to expand the meromorphic functions $f^{(num)}(z) = \sfrac{F(z)}{G(z)}$ and $f^{(den)}(z) = \sfrac{1}{G(z)}$ in the form
	\begin{equation} \label{eq:Mittag-Leffler-expansion-intro}
		f(z) = \sum_{k=1}^\infty \sum_{n=1}^{o_k} \frac{a_{-n}^{(k)}}{\left(z-z_k\right)^n} + g(z) \, ,
	\end{equation}
	where $g(z)$ is an analytic function and $o_k$ is the order of $f(z)$'s pole at $z_k$. This idea of separating a meromorphic function into a principal part and an analytic part is familiar from Mittag-Leffler's theorem.\cite{marshall_complex_2019}
	The result
	\begin{equation} \label{eq:gen-bary-rat-interp-prototype}
		F(z)
		= \frac{\sfrac{F(z)}{G(z)}}{\sfrac{1}{G(z)}}
		= \frac{\sum_{k=1}^\infty \sum_{n=1}^{o_k^{(num)}} \frac{a_{-n}^{(k,num)}}{\left(z-z_k\right)^n} + g^{(num)}(z)}
		{\sum_{k=1}^\infty \sum_{n=1}^{o_k^{(den)}} \frac{a_{-n}^{(\alpha,den)}}{\left(z-z_k\right)^n} + g^{(den)}(z)}
	\end{equation}
	fits the barycentric rational form of Eq.~(\ref{eq:barycentric-rational-interpolation-general-form}) except for the presence of $g^{(num)}(z)$, $g^{(den)}(z)$, which we will call the \emph{analytic remainders}.
	The Laurent coefficients $a_{-n}^{(k,num)}$, $a_{-n}^{(k,den)}$ can be found with standard methods and the analytic remainders can be calculated as power series via contour integration.
	Calculating the analytic remainders is the main challenge in applying this method, but asymptotic methods and straightforward bounds can bring it within reach.
	Additionally, the contour integrals provide bounds on the error of approximate interpolations in which only finitely many nodes are used.
	
	The paper is outlined as follows.
	We show how to construct the desired expansions and highlight a few of their properties in Sec.~\ref{sec:barycentric-construction}.
	As a simple demonstration, we apply this method to cosine in Sec.~\ref{sec:cos-barycentric}.
	We then show how to apply the method to the more interesting case of the Bessel functions of the first kind, $J_q(z)$, to yield closed-form barycentric rational interpolation formulae in Sec.~\ref{sec:bessel-barycentric}.
	
	\section{Constructing barycentric rational expansions} \label{sec:barycentric-construction}
	
	We will take the constcuction of these barycentric expansions in two steps. In the first step, we establish the method for separating meromorphic functions into a sum over principal parts and an analytic remainder in the form of Eq.~(\ref{eq:Mittag-Leffler-expansion-intro}). In the second step, we apply this result to produce the barycentric expansion as given Eq.~(\ref{eq:gen-bary-rat-interp-prototype}) and examine its properties.
	
	\subsection{Expanding meromorphic functions as a sum over principal parts and an analytic remainder} \label{sec:general-analytic-remainder-expression}
	
	In the following lemma, the region $\Omega_m$ should be thought of as the region over which the nodes are distributed. To get the exact barycentric expansion, we will take the limit of a sequence of $\Omega_m$ with each $\Omega_m \subset \Omega_{m+1}$ and $\lim_{m \to \infty} \Omega_m = \mathbb{C}$.
	
	\begin{lemma} \label{lemma:expansion-lemma}
		Let $f(z)$ be a meromorphic function defined on $\mathbb{C}\backslash\{z_k\}$ with discrete poles $z_k$, and let $\Omega_m$ be a simply connected set whose boundary $\partial\Omega_m$ does not pass through any poles of $f(z)$. Then, provided $z\in\Omega_m$ is not a pole of $f(z)$,
		\begin{equation} \label{eq:f-with-g}
			f(z) = \sum_{z_k \in \Omega_m} \sum_{n=1}^{o_k} \frac{a_{-n}^{(k)}}{(z-z_k)^n} + g_m(z) \, .
		\end{equation}
		where $o_k$ is the degree of the pole at $z_k$ and
		\begin{equation} \label{eq:g-integral}
			g_m(z) = \frac{1}{2 \pi i} \oint_{\partial\Omega_m} dw \frac{f(w)}{w-z} \, .
		\end{equation}
		Furthermore, if $|z| < \max_{w\in\partial\Omega} |w|$, then $g(z)$ admits a power series expansion with coefficients which are independent of $z$,
		\begin{equation} \label{eq:g-integral-power-series}
			g_m(z) = \sum_{j=0}^\infty z^j \left( \frac{1}{2 \pi i} \oint_{\partial\Omega_m} dw \frac{f(w)}{w^{j+1}} \right) \, .
		\end{equation}
	\end{lemma}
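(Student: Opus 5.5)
The plan is to apply the residue theorem to the auxiliary function $w \mapsto f(w)/(w-z)$ on $\Omega_m$. I will assume, as the lemma implicitly does, that $\Omega_m$ is bounded with a piecewise-smooth (rectifiable) positively oriented boundary, so that the contour integral in Eq.~(\ref{eq:g-integral}) makes sense. Then $\overline{\Omega_m}$ is compact and, since the poles are discrete, it contains only finitely many $z_k$.

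\textbf{Identifying the residues.} For fixed $z\in\Omega_m$ that is not a pole, the integrand $f(w)/(w-z)$ is meromorphic on a neighbourhood of $\overline{\Omega_m}$. Its only poles inside $\Omega_m$ are $w=z$ and the $z_k\in\Omega_m$.
\begin{itemize}
\item At $w=z$ the pole is simple and the residue is $f(z)$.
\item At $w=z_k$, write $f(w)=\sum_{n=1}^{o_k} a_{-n}^{(k)}(w-z_k)^{-n}+h_k(w)$ with $h_k$ analytic near $z_k$. The term $h_k(w)/(w-z)$ is analytic at $z_k$, because $z\neq z_k$, so it contributes nothing.
\item For the principal part, expand
\begin{equation*}
\frac{1}{w-z}=-\sum_{j\ge 0}\frac{(w-z_k)^j}{(z-z_k)^{j+1}}
\end{equation*}
near $w=z_k$. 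The coefficient of $(w-z_k)^{-1}$ in $(w-z_k)^{-n}/(w-z)$ comes from $j=n-1$ and equals $-(z-z_k)^{-n}$.
\end{itemize}
So the residue at $z_k$ is $-\sum_{n=1}^{o_k} a_{-n}^{(k)}(z-z_k)^{-n}$. Summing all residues gives
\begin{equation*}
\frac{1}{2\pi i}\oint_{\partial\Omega_m}\frac{f(w)}{w-z}\,dw = f(z)-\sum_{z_k\in\Omega_m}\sum_{n=1}^{o_k}\frac{a_{-n}^{(k)}}{(z-z_k)^n},
\end{equation*}
which is exactly Eq.~(\ref{eq:f-with-g}) with $g_m$ as in Eq.~(\ref{eq:g-integral}).

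\textbf{Power series.} Next I observe that $g_m$, being a Cauchy-type integral of a function continuous on $\partial\Omega_m$, is analytic in $\Omega_m$. Although $f$ has poles at the $z_k$, the identity just proved shows these singularities cancel against the subtracted principal parts. To obtain Eq.~(\ref{eq:g-integral-power-series}) I will assume $0\in\Omega_m$ and expand
\begin{equation*}
\frac{1}{w-z}=\sum_{j\ge0}\frac{z^j}{w^{j+1}}, \qquad w\in\partial\Omega_m .
\end{equation*}
This geometric series converges uniformly in $w\in\partial\Omega_m$ when $|z|<\min_{w\in\partial\Omega_m}|w|$, i.e.\ when $|z|<\operatorname{dist}(0,\partial\Omega_m)$. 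Since $f$ is bounded on the compact contour, uniform convergence lets me interchange sum and integral, giving Eq.~(\ref{eq:g-integral-power-series}) with $z$-independent coefficients.

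\textbf{Main obstacle.} The delicate point is the radius condition. As written, the lemma says ``$|z|<\max|w|$''. The straightforward argument requires $\min_{w\in\partial\Omega_m}|w|$, or more generally that $z$ lies in the largest disc about $0$ contained in $\Omega_m$. Beyond that disc, the Taylor series of $g_m$ at $0$ need not converge; it converges on the largest disc about $0$ on which $g_m$ is analytic, which is at least that disc.

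I would therefore prove the statement with the min (distance) condition and remark on this. Alternatively, for star-shaped $\Omega_m$ one could try deforming the contour to a circle, but I do not expect that to recover the max version in general.
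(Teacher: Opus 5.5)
Your proof is correct and follows essentially the same route as the paper. Both apply the residue theorem to $f(w)/(w-z)$ on $\Omega_m$, then expand $1/(w-z)$ as a geometric series in $z/w$ and swap sum and integral using uniform convergence on $\partial\Omega_m$; the only difference is that you get the residue at $z_k$ by expanding $1/(w-z)$ about $z_k$, where the paper uses the derivative-limit formula, which changes nothing. Your replacement of $\max_{w\in\partial\Omega_m}|w|$ by $\min_{w\in\partial\Omega_m}|w|$ is also right: the paper's uniform-convergence step needs the min condition, and the max version genuinely fails. For example, take $f(w)=1/\sin^2 w$ on the square with $R_m=\pi(m+\tfrac12)$ and $m\ge1$: points of $\Omega_m$ near the corners lie beyond the pole at $\pi(m+1)$, which limits the radius of convergence of $g_m$'s Taylor series. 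This is harmless for the paper's later use, since $\min_{\partial\Omega_m}|w|=R_m\to\infty$.
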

	
	\begin{proof}
		The poles of $\frac{f(w)}{w-z}$ as a function of $w$ are $w=z$ and the poles of $f(w)$ itself, i.e., $w=z_k$. Since $z$ is not a pole of $f(w)$,
		\begin{equation}
			Res\left[\frac{f(w)}{z-w} , z\right] = -f(z) \, .
		\end{equation}
		For the poles at each $z_k$, use the limit formula for residues with the Laurent expansion of $f(w)$ about $z_k$,
		\begin{align}
			Res\left[\frac{f(w)}{z-w} , z_k\right] & = \sum_{n=1}^{o_k} Res\left[\frac{1}{z-w}\frac{a_{-n}^{(k)}}{(w-z_k)^n} , z_k\right] \\
			& = \sum_{n=1}^{o_k} \frac{a_{-n}^{(k)}}{(n-1)!} \lim_{w \to z_k} \left(\frac{d^{n-1}}{dw^{n-1}}\left( (w-z_k)^n \frac{1}{z-w} \frac{1}{(w-z_k)^n} \right)\right) \nonumber \\
			& = \sum_{n=1}^{o_k} \frac{a_{-n}^{(k)}}{(n-1)!} \lim_{w \to z_k} \left( (n-1)! \frac{1}{(z-w)^n} \right) = \sum_{n=1}^{o_k} \frac{a_{-n}^{(k)}}{(z-z_k)^n} \, , \nonumber
		\end{align}
		where $a_{-n}^{(k)}$ is the $-n$th term in the principal part of the Laurent expansion of $f(z)$ about its pole at $z_k$. Hence,
		\begin{equation} \label{eq:principal-part-integral}
			\frac{1}{2 \pi i} \oint_{\partial\Omega_m} dw \frac{f(w)}{z-w} = - f(z) + \sum_{z_k \in \Omega_m} \sum_{n=1}^{o_k} \frac{a_{-n}^{(k)}}{(z-z_k)^n} \, ,
		\end{equation}
		which can be rearranged to yield Eq.~(\ref{eq:f-with-g}).
		
		If $|z| < \max_{w\in\partial\Omega_m} |w|$, we can separate out dependence on $z$ by expanding in powers of z,
		\begin{equation}
			\oint_{\partial\Omega_m} dw \frac{f(w)}{w-z} = \oint_{\partial\Omega_m} dw \frac{1}{w} \frac{f(w)}{1-z/w}  = \sum_{j=0}^\infty z^j \oint_{\partial\Omega_m} dw \frac{f(w)}{w^{j+1}} \, .
		\end{equation}
		We are able to exchange the order of the sum and integral because $f(w)$ has a maximum on $\partial\Omega_m$ and the geometric series converges uniformly on $\partial\Omega_m$. Taking this expansion in Eq.~(\ref{eq:g-integral}) yields Eq.~(\ref{eq:g-integral-power-series}).
	\end{proof}
	
	All the preceding formulae are exact and give us a way to express $f(z)$ in terms of a sum over its Laurent series principal parts at each pole and an analytic remainder, both of which depend on the size of the integration region $\Omega_m$. Enlarging the region to encapsulate more poles adds new terms to the principal part sum in a way that is easy to evaluate. On the other hand, it is impractical, if not impossible, to evaluate the analytic remainder integrals for arbitrary $\partial\Omega_m$ - in fact, in our applications, these integrals can only be evaluated in closed form in the limit $\Omega_m\to\mathbb{C}$.
	
	The solution is to make the following observation.
	\begin{corollary} \label{cor:full-expansion-f}
		We can define the full expansion of a function $f(z)$ as the case $m\to\infty, \Omega_m\to\mathbb{C}$ of Lemma~\ref{lemma:expansion-lemma},
		\begin{equation} \label{eq:f-full-expansion}
			f(z) = \sum_{k=1}^\infty \sum_{n=1}^{o_k} \frac{a_{-n}^{(k)}}{(z-z_k)^n} + g(z) \, ,
		\end{equation}
		where $g(z)$ is the full analytic remainder defined by
		\begin{equation} \label{eq:full-analyt-rem-limit}
			g(z) = \lim_{m\to\infty} g_m(z) = \sum_{j=0}^\infty z^j \left( \frac{1}{2 \pi i} \lim_{m\to\infty} \oint_{\partial\Omega_m} dw \frac{f(w)}{w^{j+1}} \right)
		\end{equation}
		provided the limits of these integrals converge and produce a convergent series as $m\to\infty$ for all $j$.
	\end{corollary}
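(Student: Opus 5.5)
The plan is to fix a point $z$ that is not a pole of $f$ and to pass to the limit $m\to\infty$ directly in Eq.~(\ref{eq:f-with-g}) of Lemma~\ref{lemma:expansion-lemma}. Since the $\Omega_m$ are nested and exhaust $\mathbb{C}$, there is an index $M$ such that $z\in\Omega_m$ and $|z|<\max_{w\in\partial\Omega_m}|w|$ for all $m\ge M$. Both parts of the lemma therefore apply for every $m\ge M$. For each such $m$ we obtain the identity
\begin{equation*}
	f(z) = P_m(z) + g_m(z), \qquad P_m(z)=\sum_{z_k\in\Omega_m}\sum_{n=1}^{o_k}\frac{a_{-n}^{(k)}}{(z-z_k)^n},
\end{equation*}
and $g_m(z)$ is given by the power series (\ref{eq:g-integral-power-series}).

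The second step handles the analytic remainder. By hypothesis, the coefficient limits $c_j=\lim_{m\to\infty}\frac{1}{2\pi i}\oint_{\partial\Omega_m}dw\,f(w)w^{-j-1}$ exist for every $j$, and the series $\sum_j c_j z^j$ converges. We need $\lim_m g_m(z)=\sum_j c_j z^j$, which requires exchanging the limit in $m$ with the sum over $j$. I would justify this by dominated convergence on the series, or equivalently Tannery's theorem. The natural route is a uniform bound $|c_j^{(m)}|\le B_j$ with $\sum_j B_j|z|^j<\infty$. Such a bound should come from the same Cauchy-type estimate used in the lemma, namely the maximum of $|f|$ on $\partial\Omega_m$ times the length of $\partial\Omega_m$ over $\min|w|^{j+1}$. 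I would read the phrase ``provided the limits \ldots produce a convergent series'' as including this uniformity, and I would state that reading explicitly. This exchange is the main obstacle. Without some uniform control, the pointwise convergence of the coefficients does not imply convergence of the sums. If needed, I would therefore impose it as part of the hypothesis rather than derive it.

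The third step uses the identity itself. Given that $g_m(z)\to g(z)$, we get $P_m(z)=f(z)-g_m(z)\to f(z)-g(z)$. The partial sums over poles, taken in the order in which the exhausting regions $\Omega_m$ capture them, therefore converge. Enumerating the poles $z_k$ in that order, this limit is exactly the infinite sum in Eq.~(\ref{eq:f-full-expansion}). Note that this convergence is a consequence of the argument, not an extra assumption. Its order of summation is tied to the sequence $\Omega_m$, so unconditional convergence is not claimed. Combining the two limits gives Eq.~(\ref{eq:f-full-expansion}) with $g$ as in Eq.~(\ref{eq:full-analyt-rem-limit}).
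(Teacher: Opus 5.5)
Your proof is the paper's argument: take the exhausting regions large enough that $z\in\Omega_m$ and the power-series form of $g_m$ applies, let $m\to\infty$ by exchanging the limit with the sum over $j$ via dominated convergence, and read off convergence of the pole sum (in the order the $\Omega_m$ capture the poles) from $f=P_m+g_m$. Your explicit demand for an $m$-uniform summable bound $|c_j^{(m)}|\le B_j$ supplies the dominating sequence that the paper's one-line appeal to dominated convergence leaves unstated, and you are right to impose it, since convergence of each coefficient integral together with convergence of $\sum_j c_j z^j$ does not by itself force $g_m(z)\to g(z)$ (similarly, the geometric-series step really needs $|z|<\min_{w\in\partial\Omega_m}|w|$, not the $\max$ in the lemma; your Cauchy bound already uses this, and it holds for all large $m$).
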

	
	\begin{proof}
		The interchanging of limits follows from dominated convergence. This approach is valid for all $z$ because $\lim_{m\to\infty} \Omega_m = \mathbb{C}$ and so we can always choose the first region in the sequence of $\Omega_m$ to be large enough to contain $z$. Likewise, to perform the series expansion, we can also choose the first region in the sequence to be large enough to satisfy $|z| < \max_{w\in\partial\Omega_m} |w|$.
	\end{proof}
	
	We can approximate the full expansion by including finitely many nodes but using the full analytic remainder. We will label such an approximation as
	\begin{equation} \label{eq:approximate-expansion-m}
		f_m(z) = \sum_{z_k\in\Omega_m} \sum_{n=1}^{o_k} \frac{a_{-n}^{(k)}}{(z-z_k)^n} + g(z) \, .
	\end{equation}
	While the error of these approximate expansions comes from missing terms in the sum over Laurent series principal parts, the error may also be calculated in terms of the exact and approximate analytic remainders.
	
	\begin{proposition} \label{prop:f-error-two-expressions}
		The error of the expansion in Eq.~(\ref{eq:approximate-expansion-m}),
		\begin{equation} \label{eq:f-error}
			e_m(z) = f_m(z) - f(z) \, ,
		\end{equation}
		can be written as either
		\begin{equation}
			e_m(z) = \sum_{z_k \notin \Omega_m} \sum_{n=1}^{o_k} \frac{a_{-n}^{(k)}}{\left(z-z_k\right)^n}
			\quad \text{or} \quad
			e_m(z) = g(z) - g_m(z) \, .
		\end{equation}
	\end{proposition}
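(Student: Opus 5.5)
Both expressions come from subtracting two exact representations of $f(z)$ from the definition $f_m(z)$ in Eq.~(\ref{eq:approximate-expansion-m}). Throughout, fix $z$ not a pole of $f$, and take $m$ large enough that $z\in\Omega_m$. This makes Lemma~\ref{lemma:expansion-lemma} applicable, and we assume, as in Corollary~\ref{cor:full-expansion-f}, that the limits defining the full expansion exist.

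\emph{Second expression.} Use Lemma~\ref{lemma:expansion-lemma} to write
\begin{equation*}
f(z)=\sum_{z_k\in\Omega_m}\sum_{n=1}^{o_k}\frac{a_{-n}^{(k)}}{(z-z_k)^n}+g_m(z).
\end{equation*}
Subtract this from $f_m(z)$. The finite principal-part sums over $z_k\in\Omega_m$ are identical in the two expressions and cancel term by term, leaving
\begin{equation*}
e_m(z)=g(z)-g_m(z).
\end{equation*}
This step is purely algebraic and needs no convergence argument beyond the existence of $g(z)$.

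\emph{First expression.} Use the full expansion of Corollary~\ref{cor:full-expansion-f}, Eq.~(\ref{eq:f-full-expansion}), in place of $f(z)$. Since the sum over all poles converges (it equals $f(z)-g(z)$, where both terms are finite), it can be split into the finite part over $z_k\in\Omega_m$ and the tail over $z_k\notin\Omega_m$. The tail converges because it is a convergent series minus finitely many of its terms. The $g(z)$ terms cancel, as do the finite sums over $z_k\in\Omega_m$, leaving $e_m(z)$ equal to the tail with an overall sign.

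\emph{Main obstacle: the sign and the ordering of the infinite sum.} First, $f_m-f$ literally equals $-\sum_{z_k\notin\Omega_m}(\cdots)$. The statement as written therefore matches either up to this sign convention, or it should be read with $e_m=f-f_m$. I would note this explicitly and, for consistency, check it against the second expression: $g_m$ contains exactly the poles outside $\Omega_m$ in the limit, since $g=\lim_{M\to\infty} g_M$ and $g_M-g_m$ is the sum of principal parts in $\Omega_M\setminus\Omega_m$. Second, the splitting of the infinite sum must be justified. The safest route is to avoid rearrangement entirely. Write
\begin{equation*}
g_M(z)-g_m(z)=-\sum_{z_k\in\Omega_M\setminus\Omega_m}\sum_{n=1}^{o_k}\frac{a_{-n}^{(k)}}{(z-z_k)^n},
\end{equation*}
which follows from applying Lemma~\ref{lemma:expansion-lemma} with $\Omega_M$ and with $\Omega_m$ and subtracting. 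Then let $M\to\infty$; this interprets the tail sum as the limit over the nested regions $\Omega_M$ and yields both expressions at once.
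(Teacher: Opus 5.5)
Your proposal is essentially the paper's proof. Both expressions come from subtracting from the definition of $f_m$ either the full expansion of Corollary~\ref{cor:full-expansion-f} or the finite-region identity of Lemma~\ref{lemma:expansion-lemma}, and your telescoping argument via $g_M-g_m$ soundly justifies the tail sum, which the paper does not spell out.

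Your sign observation is correct and catches a slip in the paper's own first computation, which writes $f_m-f=+\sum_{z_k\notin\Omega_m}(\cdots)$. The consistent result is $e_m(z)=g(z)-g_m(z)=-\sum_{z_k\notin\Omega_m}\sum_{n=1}^{o_k}a_{-n}^{(k)}(z-z_k)^{-n}$. Note also that reading $e_m=f-f_m$ would not rescue the statement as written, since that convention flips the second expression too.
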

	
	\begin{proof}
		Subtracting Eq.~(\ref{eq:f-full-expansion}) from Eq.~(\ref{eq:approximate-expansion-m}),
		\begin{align} \label{eq:f-error-Laurent-sum}
			e_m(z)
			& = \left(\sum_{z_k\in\Omega_m} \sum_{n=1}^{o_k} \frac{a_{-n}^{(k)}}{(z-z_k)^n} + g(z)\right) - \left(\sum_{k=1}^\infty \sum_{n=1}^{o_k} \frac{a_{-n}^{(k)}}{(z-z_k)^n} + g(z)\right) \nonumber \\
			& = \sum_{z_k\notin\Omega_m} \sum_{n=1}^{o_k} \frac{a_{-n}^{(k)}}{(z-z_k)^n} \, .
		\end{align}
		On the other hand, Eq.~(\ref{eq:g-integral-power-series}) is also exact for each $\Omega_m$. Subtracting this case from Eq.~(\ref{eq:approximate-expansion-m}),
		\begin{align}
			e_m(z)
			& = \left(\sum_{z_k\in\Omega_m} \sum_{n=1}^{o_k} \frac{a_{-n}^{(k)}}{(z-z_k)^n} + g(z)\right) - \left(\sum_{z_k\in\Omega_m} \sum_{n=1}^{o_k} \frac{a_{-n}^{(k)}}{(z-z_k)^n} + g_m(z)\right) \nonumber \\
			& = g(z) - g_m(z)
		\end{align}
	\end{proof}
	Using the approximate analytic remainder $g_m(z)$ will prove to be a much simpler way to analyze error than attempting to evaluate partial sums.
	Since we need to calculate $g_m(z)$ anyway in order to find the exact analytic remainder through the limit $g(z) = \lim_{m\to\infty} g_m(z)$, this expression for the expansion error also takes no extra effort to obtain.
	With this, we have everything necessary to construct barycentric rational expansions.
	
	\subsection{Exact and approximate barycentric rational expansions in practice}
	
	Recalling Eq.~(\ref{eq:gen-bary-rat-interp-prototype}), we apply Lemma~\ref{lemma:expansion-lemma} to expand $f^{(num)}(z) = \sfrac{F(z)}{G(z)}$ and $f^{(den)}(z) = \sfrac{1}{G(z)}$ and from these obtain the exact expansion for $F(z)$, and we use definition Eq.~(\ref{eq:approximate-expansion-m}) in the same way to produce the corresponding approximation $F_m(z)$,
	\begin{align}
		& F(z)
		= \frac{\sfrac{F(z)}{G(z)}}{\sfrac{1}{G(z)}}
		= \frac{f^{(num)}(z)}{f^{(den)}(z)}
		= \frac{\sum_{k=1}^\infty \sum_{n=1}^{o_k^{(num)}} \frac{a_{-n}^{(k,num)}}{\left(z-z_k\right)^n} + g^{(num)}(z)}
		{\sum_{k=1}^\infty \sum_{n=1}^{o_k^{(den)}} \frac{a_{-n}^{(\alpha,den)}}{\left(z-z_k\right)^n} + g^{(den)}(z)} \label{eq:gen-bary-rat-interp-exact} \\
		& \approx F_m(z) = \frac{f^{(num)}_m(z)}{f^{(den)}_m(z)}
		= \frac{\sum_{z_k\in\Omega_m} \sum_{n=1}^{o_k^{(num)}} \frac{a_{-n}^{(k,num)}}{\left(z-z_k\right)^n} + g^{(num)}(z)}
		{\sum_{z_k\in\Omega_m} \sum_{n=1}^{o_k^{(den)}} \frac{a_{-n}^{(\alpha,den)}}{\left(z-z_k\right)^n} + g^{(den)}(z)} \, . \label{eq:gen-bary-rat-interp-approx}
	\end{align}
	The only difference between Eq.~(\ref{eq:gen-bary-rat-interp-exact}) and Eq.~(\ref{eq:gen-bary-rat-interp-approx}) is in the sum over Laurent series principal parts. In the exact expression, all poles $z_k$ (now serving as the nodes of the barycentric rational interpolation) are included in the sum over Laurent series principal parts, while in the approximate expression, only those in $\Omega_m$ are.
	
	The approximate barycentric interpolations have the nice property of fixing not only the value of $F(z)$ at each node but also a certain number of its derivatives. In particular, recall that the nodes $z_k$ are the zeros of the weight function $G(z)$. The order of $G(z)$'s zero at $z_k$ dictates how many derivatives of $F_m(z)$ are guaranteed to be correct at $z_k$.
	
	\begin{proposition} \label{prop:anchor-point-fixed-derivative}
		A barycentric rational interpolation of the form in Eq.~(\ref{eq:gen-bary-rat-interp-approx}) will correctly give the first $o_k^{(den)}-1$ derivatives at each node $z_k$.
	\end{proposition}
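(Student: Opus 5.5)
The plan is to write $F_m$ in terms of the exact meromorphic functions and their truncation errors, and then compare orders of vanishing at a node. Fix a node $z_k\in\Omega_m$, and let $o_k^{(den)}$ be the order of the zero of $G$ at $z_k$. By Proposition~\ref{prop:f-error-two-expressions} applied to $f^{(num)}$ and $f^{(den)}$,
\begin{equation*}
f^{(num)}_m = \frac{F}{G} + e^{(num)}_m ,\qquad f^{(den)}_m = \frac{1}{G} + e^{(den)}_m ,
\end{equation*}
where each $e_m$ is the sum of principal parts over nodes \emph{outside} $\Omega_m$. We will assume that sum converges to something analytic near $z_k$; this is implicit in Corollary~\ref{cor:full-expansion-f}, and equivalently each $e_m = g-g_m$ is analytic inside $\Omega_m$. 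Then both $e_m^{(num)}$ and $e_m^{(den)}$ are holomorphic in a neighbourhood of $z_k$. Multiplying numerator and denominator of $F_m$ by $G$ gives
\begin{equation*}
F_m(z) = \frac{F(z) + G(z)e^{(num)}_m(z)}{1 + G(z)e^{(den)}_m(z)} .
\end{equation*}
This holds first on a punctured neighbourhood of $z_k$ and then, after removing the removable singularity, at $z_k$ itself.

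Next we subtract $F$ and simplify:
\begin{equation*}
F_m(z)-F(z) = \frac{G(z)\left(e^{(num)}_m(z) - F(z)e^{(den)}_m(z)\right)}{1+G(z)e^{(den)}_m(z)} .
\end{equation*}
Near $z_k$ the denominator tends to $1$, because $G(z_k)=0$. So the denominator is holomorphic and nonvanishing in a small disk. The bracket in the numerator is holomorphic there. Hence $F_m-F = G\cdot h$ with $h$ holomorphic near $z_k$. Since $G$ has a zero of order $o_k^{(den)}$ at $z_k$, the difference $F_m-F$ has a zero of order at least $o_k^{(den)}$. Therefore $(F_m-F)^{(j)}(z_k)=0$ for $j=0,\dots,o_k^{(den)}-1$: the value and the first $o_k^{(den)}-1$ derivatives of $F_m$ agree with those of $F$.

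We also need that the order of the pole of $f^{(den)}=1/G$ at $z_k$ equals the order of the zero of $G$. This is immediate. The numerator's pole order $o_k^{(num)}$ plays no role; it is at most $o_k^{(den)}$, and it is smaller exactly when $F(z_k)=0$.

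The main obstacle is justifying that the tail $e_m$ is holomorphic at the retained node $z_k$. I would argue this from the second expression $e_m=g-g_m$ in Proposition~\ref{prop:f-error-two-expressions}. Here $g_m$ is a Cauchy integral over $\partial\Omega_m$, hence holomorphic off $\partial\Omega_m$, and in particular at $z_k\in\Omega_m$. The full remainder $g$ is assumed entire, being given by the convergent power series of Corollary~\ref{cor:full-expansion-f}. This settles the point.
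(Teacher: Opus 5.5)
Your proof is correct, but it takes a different route from the paper's. The paper works directly with the barycentric formula. It multiplies numerator and denominator of $F_m(z_1+\delta)$ by $\delta^{o_1^{(den)}}$ and notes that the other nodes and the analytic remainders only enter at order $\delta^{o_1^{(den)}}$. It follows that the Taylor coefficients of $F_m$ at $z_1$ below that order depend only on the Laurent coefficients $a_{-n}^{(1,num)}$, $a_{-n}^{(1,den)}$. The paper then concludes they are correct because they coincide with those of the exact $m\to\infty$ expansion.

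You instead derive the exact identity $F_m-F = G\,(e_m^{(num)}-F\,e_m^{(den)})/(1+G\,e_m^{(den)})$, which is the unlinearized form of the paper's later Eq.~(\ref{eq:F-error-with-e}). You then read the vanishing order off the factor $G$. The only analytic input is that $e_m=g-g_m$ is holomorphic at a retained node, and you justify this correctly via the Cauchy integral for $g_m$.

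Your route replaces the appeal to the exact infinite expansion with a purely local argument, which needs nothing beyond analyticity of $g$ near $z_k$. It also gives a quantitative statement: $F_m-F$ is $G$ times a function holomorphic near $z_k$. This is exactly what Prop.~\ref{prop:F-error-convergence-rate} and Remark~\ref{remark:G-power-error-suppression} rely on, and it shows the $G$ prefactor there is exact rather than merely first order.

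The paper's coefficient-matching route instead makes the ``anchoring'' mechanism explicit. It shows concretely which local Laurent data fix the low-order derivatives, in the same spirit as the matrix computations of Sec.~\ref{sec:bessel-laurent}. Your identity captures the same insensitivity to other nodes and to the remainders, since any perturbation holomorphic at $z_k$ yields the same conclusion.
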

	
	\begin{proof}
		Label the nodes $z_1, z_2, \dots z_m$ and expand the approximation of Eq.~(\ref{eq:gen-bary-rat-interp-approx}) about the node $z_1$ without loss of generality for small $\delta$.
		\begin{equation}\begin{aligned}
				F_m (z_1 + \delta) & = \frac{\sum_{n=1}^{o_1^{(num)}} \frac{a_{-n}^{(1,num)}}{\delta^n} + \sum_{k=2}^m \sum_{n=1}^{o_k^{(num)}} \frac{a_{-n}^{(k,num)}}{\left(z_1-z_k+\delta\right)^n} + g^{(num)}(z)}
				{\sum_{n=1}^{o_1^{(den)}} \frac{a_{-n}^{(1,den)}}{\delta^n} + \sum_{k=2}^m \sum_{n=1}^{o_k^{(den)}} \frac{a_{-n}^{(k,den)}}{\left(z_1-z_k+\delta\right)^n} + g^{(den)}(z)} \\
				& = \frac{\sum_{n=1}^{o_1^{(num)}} a_{-n}^{(1,num)} \delta^{o_1^{(den)}-n} + O\left(\delta^{o_1^{(den)}}\right)}
				{\sum_{n=1}^{o_1^{(den)}} a_{-n}^{(1,den)} \delta^{o_1^{(den)}-n} + O\left(\delta^{o_1^{(den)}}\right)} \\
				& = F_m(z_1) + \frac{dF_m}{dz}(z_1) \delta + \frac{1}{2} \frac{d^2 F_m}{dz^2}(z_1) \delta^2 + \dots
		\end{aligned}\end{equation}
		If we match powers of $\delta$ between the last two equations, we will find that the Laurent coefficients at $z_1$, that is, $a_{-n}^{(1,num)}$ and $a_{-n}^{(1,den)}$, are the only coefficients that appear at orders of $\delta$ lower than $o_1^{(den)}$. In other words, the first $o_1^{(den)}-1$ terms in the power series, which give the first $o_1^{(den)}-1$ derivatives of $F_m(z)$ at $z_1$, depend only on the inclusion of the node $z_1$; they do not depend on the inclusion of any other nodes.
		
		In the $m\to\infty$ case, the expansion includes all nodes and becomes exact, in which case its derivatives at $z_1$ must be correct. However, we just showed that these derivatives, and in particular their correctness, cannot depend on the inclusion of any other nodes. It follows that the first $o_1^{(den)}-1$ derivatives must always be correct whenever $z_1$ is included as an node producing a pole of order $o_1^{(den)}$.
	\end{proof}
	
	Prop.~\ref{prop:anchor-point-fixed-derivative} gives an intuitive explanation for how the choice of a weight function $G(z)$ can affect the accuracy of an approximate interpolation by fixing higher-order derivatives.
	The choice of weight function and the number of nodes included are the two determining factors in the accuracy of an approximate interpolation, and we will now turn to studying these more quantitatively.
	The result allows us to understand convergence easily in terms of the analytic remainder and can also be used to formalize the intuition about higher-order weight functions producing more accurate interpolations.
	
	\begin{proposition} \label{prop:F-error-convergence-rate}
		The rate of convergence for an approximate interpolation, defined through the error
		\begin{equation}
			E_m(z) = F_m(z) - F(z) \, ,
		\end{equation}
		is given by the rate of convergence for either the numerator analytic remainder or the denominator analytic remainder, whichever is slower. That is, considering proportinality with respect to $m$, either
		\begin{equation}
			E_m(z) \propto g^{(num)}(z) - g_m^{(num)}(z) \quad \text{or} \quad E_m(z) \propto g^{(den)}(z) - g_m^{(den)}(z) \, .
		\end{equation}
	\end{proposition}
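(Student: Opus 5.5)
The plan is to write $F_m$ and $F$ as quotients of the approximate and exact expansions, and then express the numerator and denominator errors through Prop.~\ref{prop:f-error-two-expressions}. Introduce the shorthand $e_m^{(num)}(z) = g^{(num)}(z) - g_m^{(num)}(z)$ and $e_m^{(den)}(z) = g^{(den)}(z) - g_m^{(den)}(z)$. Proposition~\ref{prop:f-error-two-expressions}, applied separately to $f^{(num)}$ and $f^{(den)}$, gives $f_m^{(num)} = f^{(num)} + e_m^{(num)}$ and $f_m^{(den)} = f^{(den)} + e_m^{(den)}$.

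Substituting these into $E_m = f_m^{(num)}/f_m^{(den)} - f^{(num)}/f^{(den)}$ and putting everything over a common denominator gives
\begin{equation*}
E_m(z) = \frac{f^{(den)}(z)\, e_m^{(num)}(z) - f^{(num)}(z)\, e_m^{(den)}(z)}{f^{(den)}(z)\left(f^{(den)}(z) + e_m^{(den)}(z)\right)} .
\end{equation*}
Dividing numerator and denominator by $f^{(den)} = 1/G$ and using $f^{(num)}/f^{(den)} = F$, this simplifies to
\begin{equation*}
E_m(z) = \frac{G(z)\left(e_m^{(num)}(z) - F(z)\, e_m^{(den)}(z)\right)}{1 + G(z)\, e_m^{(den)}(z)} .
\end{equation*}
This formula is exact. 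It is valid at every $z$ that is not a node, and at nodes it extends by continuity.

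Next, fix $z$ and study the dependence on $m$. By Corollary~\ref{cor:full-expansion-f}, $g_m \to g$, so both $e_m^{(num)}(z)$ and $e_m^{(den)}(z)$ tend to $0$. The denominator $1 + G e_m^{(den)}$ therefore tends to $1$ and contributes only a factor $1 + o(1)$. The $m$-independent prefactors $G(z)$ and $F(z)$ do not affect proportionality in $m$. To leading order we get
\begin{equation*}
E_m(z) \simeq G(z)\left(e_m^{(num)}(z) - F(z)\, e_m^{(den)}(z)\right).
\end{equation*}
The error is thus a fixed linear combination of the two remainder errors. Whichever decays more slowly in $m$ dominates, which gives the stated dichotomy.

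The main obstacle is making the phrase ``whichever is slower'' honest. If the two remainder errors decay at the same rate, their combination could cancel at leading order. Likewise, at points where $F(z) = 0$, or at nodes where $G(z) = 0$, a coefficient vanishes. I would handle this by stating the result as a bound $|E_m(z)| \le C(z)\max\left(|e_m^{(num)}(z)|, |e_m^{(den)}(z)|\right)$ for large $m$. I would then note that, unless such a degenerate cancellation occurs, this bound is attained up to a constant. That interpretation is the sense of ``$\propto$'' intended in the statement. At nodes, Prop.~\ref{prop:anchor-point-fixed-derivative} already shows that the error vanishes there, which is consistent with the factor $G(z)$.
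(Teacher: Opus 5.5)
Your proposal is correct and follows the same route as the paper. You write $f_m^{(num)} = f^{(num)} + e_m^{(num)}$ and $f_m^{(den)} = f^{(den)} + e_m^{(den)}$, reduce $E_m$ to $G(z)\bigl(e_m^{(num)}(z) - F(z)e_m^{(den)}(z)\bigr)$, and identify $e_m = g - g_m$ via Prop.~\ref{prop:f-error-two-expressions}. Your version is slightly sharper in two ways: you keep the exact identity with the factor $1/\bigl(1 + G(z)e_m^{(den)}(z)\bigr) = 1 + o(1)$ where the paper simply truncates at first order, and you explicitly flag the equal-rate cancellation and the zeros of $G$ and $F$ that the paper's ``whichever is slower'' glosses over.
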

	
	\begin{proof}
		Using Eqs.~(\ref{eq:gen-bary-rat-interp-exact}) and (\ref{eq:gen-bary-rat-interp-approx}) to write $E_m(z)$ in terms of the error in the expansion of its numerator and denominator from Eq.~(\ref{eq:f-error}), and keeping only to first order in $e_m^{(num)}(s)$ and $e_m^{(den)}(s)$, we have
		\begin{equation}\begin{aligned} \label{eq:F-error-with-e}
				E_m(z) & = F_m(z) - F(z)
				=  \frac{f_m^{(num)}(z)}{f_m^{(den)}(z)} - \frac{f^{(num)}(z)}{f^{(den)}(z)} \\
				& = \frac{f^{(num)}(z) + e_m^{(num)}(z)}{f^{(den)}(z) + e_m^{(den)}(z)} - \frac{f^{(num)}(z)}{f^{(den)}(z)} \\
				& \approx \frac{1}{f^{(den)}(z)}\left(e_m^{(num)}(z) - \frac{f^{(num)}(z)}{f^{(den)}(z)} e_m^{(den)}(z)\right) \\
				& = G(z)\left(e_m^{(num)}(z) - F(z)e_m^{(den)}(z)\right) \, .
		\end{aligned}\end{equation}
		The factor of $G(z)$ goes to $0$ at each node $z_k$, representing the property that the interpolation passes through each node exactly. Putting this aside, the interpolation error $E_m(z)$ only depends on $m$ through the numerator and denominator expansion errors, given through Prop.~\ref{prop:f-error-two-expressions} by
		\begin{equation}
			e_m^{(num)}(s) = g^{(num)}(z) - g_m^{(num)}(z) \, , \quad e_m^{(den)}(s) = g^{(den)}(z) - g_m^{(den)}(z) \, ,
		\end{equation}
		Hence, whichever analytic remainder integral converges the most slowly dictates the overall rate of convergence for $F_m(z)$ as $m$ increases and more nodes are added to the interpolation.
	\end{proof}
	
	\begin{remark} \label{remark:G-power-error-suppression}
		The factor of $G(z)$ in Eq.~(\ref{eq:F-error-with-e}) also explains quantitatively why a higher-order weight function which fixes more derivatives tends to produce a more accurate interpolation.
		The natural way to increase the order of the weight function's zeros at each node is to exponentiate it.
		
		Consider therefore the two interpolations
		\begin{equation} \label{eq:F-p-example-for-error-suppression}
			F^1(z) = \frac{\sfrac{F(z)}{G(z)}}{\sfrac{1}{G(z)}} \, ,
			\quad\quad
			F^p(z) = \frac{\sfrac{F(z)}{\left(G(z)\right)^p}}{\sfrac{1}{\left(G(z)\right)^p}} \, ,
		\end{equation}
		for integer $p>1$.
		Within some region $R$, we can assume without loss of generality that $\left|G(z)\right| \le 1$ because Eq.~(\ref{eq:F-p-example-for-error-suppression}) does not change when we replace ${G(z) \to \sfrac{G(z)}{\max_{z \in R} G(z)}}$. But if $\left|G(z)\right| \le 1$ in $R$, then ${\left|\left(G(z)\right)^p\right| \le |G(z)}|$ in $R$.
		
		Therefore, raising $G(z)$ to a higher power shrinks it as a prefactor in Eq.~(\ref{eq:F-error-with-e}) for the error $E_m(z)$.
		Moreover, because $R$ was arbitrary, this principle holds true for all $z\in\mathbb{C}$.
		While exponentiating the weight function usually reduces error in this way, it is not guaranteed to do so because changing the weight function also changes $e_m^{(num)}(s)$ and $e_m^{(den)}(s)$, and these may increase.
	\end{remark}
	
	From a practical perspective, there is a tradeoff between accuracy and simplicity in choosing a power on the weight function. While higher powers tend to produce more accurate interpolations, the interpolation formulae are more complex because higher-order poles yield Laurent series principal parts with more terms.
	
	Now it is time to put this method into practice. The key to applying it is in choosing use easy-to-integrate contours. In this paper, we will use square contours with sides along $x=\pm R_m$ or $y=\pm R_m$ and we will label these sides as $\Gamma_m^{(S)}$ where $S$ is $R$ for right, $T$ for top, $L$ for left, or $B$ for bottom. Separating the contour into four integrals along each side, we define
	\begin{equation} \label{eq:I-integral-def}
		I_{(S,m)}^{(j)} = \int_{\Gamma_m^{(S)}} dw \frac{f(w)}{w^{j+1}}
	\end{equation}
	and write
	\begin{equation} \label{eq:contour-integral-in-sides}
		\oint_{\partial\Omega_m} dw \frac{f(w)}{w^{j+1}}
		= I_{(R,m)}^{(j)} + I_{(T,m)}^{(j)} + I_{(L,m)}^{(j)} + I_{(B,m)}^{(j)} \, .
	\end{equation}
	We will make use of the symmetry of this contour as well as the symmetry of the interpolated function using the following argument.
	
	\begin{proposition} \label{prop:analytic-remainder-parity-argument}
		Take $\partial\Omega_m$ to be a square with sides at $x,y = \pm R_m$ and suppose that $f(w)$ has (anti)symmetry defined by $f(-w) = (-1)^p f(w)$, i.e., $p=0$ if $f$ is even and $p=1$ if $f$ is odd.
		Then the entire contour can be calculated from only the right side and top side integrals.
		Furthermore, if $j$ and $p$ have opposite parity, then opposite sides of the contour cancel and the entire contour integral $\oint_{\partial\Omega_m} dw \frac{f(w)}{w^{j+1}}$ vanishes.
	\end{proposition}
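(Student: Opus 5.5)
The plan is to relate the left and bottom side integrals to the right and top ones through the substitution $w \to -w$, which maps the square $\partial\Omega_m$ onto itself. Each side is oriented counterclockwise. The right side $\Gamma_m^{(R)}$ runs from $R_m - iR_m$ to $R_m + iR_m$. The left side $\Gamma_m^{(L)}$ runs from $-R_m + iR_m$ to $-R_m - iR_m$. The point reflection $w \mapsto -w$ therefore maps $\Gamma_m^{(R)}$ onto $\Gamma_m^{(L)}$ with the same (counterclockwise) orientation. In the same way it maps $\Gamma_m^{(T)}$ onto $\Gamma_m^{(B)}$.

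The key step is the change of variables $w = -u$ in $I_{(L,m)}^{(j)}$. As $w$ traverses $\Gamma_m^{(L)}$ in its orientation, $u$ traverses $\Gamma_m^{(R)}$ in its orientation, and $dw = -du$. Using $f(-u) = (-1)^p f(u)$ and $(-u)^{j+1} = (-1)^{j+1}u^{j+1}$, I expect
\begin{equation*}
I_{(L,m)}^{(j)} = \int_{\Gamma_m^{(R)}} (-du)\,\frac{(-1)^p f(u)}{(-1)^{j+1}u^{j+1}} = (-1)^{p+j}\, I_{(R,m)}^{(j)} \, .
\end{equation*}
The identical computation gives $I_{(B,m)}^{(j)} = (-1)^{p+j} I_{(T,m)}^{(j)}$. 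Substituting both relations into Eq.~(\ref{eq:contour-integral-in-sides}) yields
\begin{equation*}
\oint_{\partial\Omega_m} dw\, \frac{f(w)}{w^{j+1}} = \left(1 + (-1)^{p+j}\right)\left(I_{(R,m)}^{(j)} + I_{(T,m)}^{(j)}\right) \, .
\end{equation*}
This proves the first claim, since only the right and top integrals are needed. It also proves the second claim: when $j$ and $p$ have opposite parity, $p+j$ is odd, the prefactor vanishes, and so opposite sides cancel pairwise. When the parities agree, the contour integral equals $2\left(I_{(R,m)}^{(j)} + I_{(T,m)}^{(j)}\right)$.

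The only delicate point is the orientation bookkeeping in the substitution. One must check that the sign from $dw=-du$ is not compensated by a reversal of the endpoints. It is not: the reflection is a rotation by $\pi$, which preserves the counterclockwise orientation of the square, so the endpoints of $\Gamma_m^{(L)}$ map exactly to the correctly ordered endpoints of $\Gamma_m^{(R)}$.

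It is also worth noting that $f(w)$ is continuous on the contour by hypothesis, since no poles lie on $\partial\Omega_m$. The symmetry of the pole set under $w\mapsto -w$, implied by the parity of $f$, guarantees that the reflected sides also avoid poles, so all four integrals are well defined.
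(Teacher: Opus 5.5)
Your proposal is correct and is essentially the paper's argument: the paper parametrizes the left side and substitutes $y\to -y$, which is exactly your point reflection $w\mapsto -w$, and obtains $I_{(L,m)}^{(j)} = (-1)^{p+j} I_{(R,m)}^{(j)}$, treating top and bottom ``similarly.'' Your coordinate-free version adds an explicit orientation check and the combined formula $\left(1+(-1)^{p+j}\right)\left(I_{(R,m)}^{(j)}+I_{(T,m)}^{(j)}\right)$, which makes the bookkeeping and the first claim more explicit than the paper does.
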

	
	\begin{proof}
		Changing variables $y\to-y$ and using the parity property,
		\begin{equation}\begin{aligned} \label{eq:parity-argument-RL}
				I_{(L,m)}^{(j)}
				& = \int_{R_m}^{-R_m} idy \frac{f(-R_m+iy)}{(-R_m+iy)^{j+1}}
				= \int_{-R_m}^{R_m} (-idy) \frac{f(-(R_m+iy))}{(-(R_m+iy))^{j+1}} \\
				& = (-1)^{p+j} \int_{\Gamma_m^{(R)}} dw \frac{f(w)}{w^{j+1}}
				= (-1)^{p+j} I_{(R,m)}^{(j)} \, .
		\end{aligned}\end{equation}
		Hence, if $p$ and $j$ have opposite parity, the two parts of the contour integral cancel.
		A similar calculation for the top and bottom sides of the contour yields the same result.
	\end{proof}
	
	This argument can readily be generalized to integrate $\frac{f(w)}{w^{j+1}}$ over any pair of symmetric contours $\gamma_+$, $\gamma_-$ parametrized as $\gamma_-(s) = -\gamma_+(1-s)$, but for simplicity we only focus on the necessary case here.
	e
	\section{Barycentric rational expansions for cosine} \label{sec:cos-barycentric}
	
	Expansions for trigonometric functions in the form of Eq.~(\ref{eq:Mittag-Leffler-expansion-intro}) and Sec.~\ref{sec:general-analytic-remainder-expression} are well-known and often used to demonstrate Mittag-Leffler's theorem, though not in the context of barycentric interpolation.\cite{arfken_mathematical_2013,marshall_complex_2019}
	These cases can be obtained quickly using Liouville's theorem, but for the purposes of demonstration, we derive them here using the more general method presented in Sec.~\ref{sec:barycentric-construction}.
	
	\subsection{Expansion weighted by $\sin^2 z$} \label{sec:first-cos-expansion}
	
	We take as an example
	\begin{equation}
		F(z) = \cos(z) \, , \quad G(z) = \left(\frac{d}{dz}\cos(z)\right)^2 = \sin^2(z)
	\end{equation}
	in Eq.~(\ref{eq:intro-F-G-split}) so that
	\begin{equation} \label{eq:cosine-barycentric-idea}
		\cos(z)
		= \frac{\sfrac{F(z)}{G(z)}}{\sfrac{1}{G(z)}}
		= \frac{\sfrac{\cos(z)}{\left(\frac{d}{dz}\cos(z)\right)^2}}{\sfrac{1}{\left(\frac{d}{dz}\cos(z)\right)^2}}
		= \frac{\sfrac{\cos(z)}{\sin^2(z)}}{\sfrac{1}{\sin^2(z)}}
	\end{equation}
	gives an expression for cosine weighted at its extrema. The result will be
	\begin{equation}
		\cos(z) = \frac{ \sum_{k=-\infty}^{\infty} \frac{(-1)^k}{(z-k\pi)^2} }{ \sum_{k=-\infty}^{\infty} \frac{1}{(z-k\pi)^2} } \, .
	\end{equation}
	Fig.~\ref{fig:cosine-interpolation-example} shows approximate interpolations resulting from finite versions of this sum.
	To demonstrate this result, we first find the principal parts of the numerator $\sfrac{\cos(z)}{\sin^2(z)}$ and denominator $\sfrac{1}{\sin^2(z)}$ of Eq.~(\ref{eq:cosine-barycentric-idea}), and then find their analytic remainders.
	
	\begin{figure}[h]
		\centering
		\includegraphics[width=0.75\linewidth]{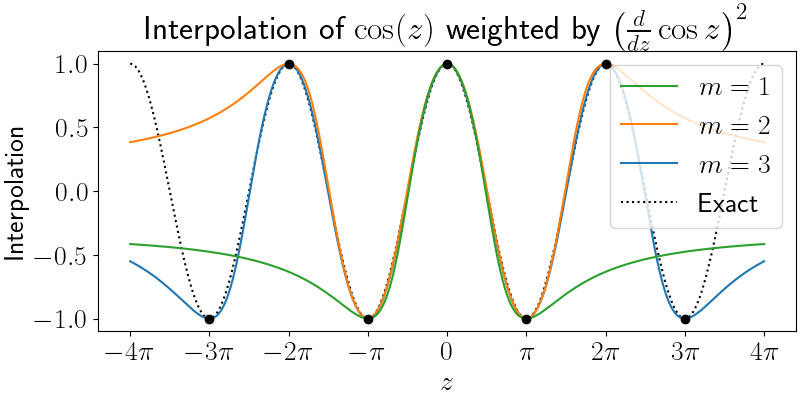}
		\caption{Here we plot an interpolation based off an expansion of $\cos z = \frac{\sfrac{\cos z}{\sin^2 z}}{\sfrac{1}{\sin^2 z}}$ and its evolution as more nodes are added. Each curve includes the node $z_0 = 0$ and the next $N$ nodes for a given $N$.}
		\label{fig:cosine-interpolation-example}
	\end{figure}
	
	The Laurent series principal parts are easy to obtain from the power series for the trigonometric functions,
	\begin{equation} \label{eq:reciprocal-trig-power-series}
		\csc(z) = \frac{1}{z} + \frac{z}{6} + \dots \, , \quad\quad
		\cos(z) = 1 - \frac{z^2}{2} + \dots \, ,
	\end{equation}
	and their antiperiodicity over $\pi$. We find that the Laurent series principal part of $\frac{\cos(z)}{\sin^2(z)}$ at $z_k = \pi k$ is $\frac{(-1)^n}{(z-k\pi)^2}$, and likewise the principal part of $\frac{1}{\sin^2(z)}$ at $z_k$ is $\frac{1}{(z-k\pi)^2}$.
	
	To evaluate the analytic remainders for the numerator $\frac{\cos z}{\sin^2 z}$ and denominator $\frac{1}{\sin^2 z}$, we take $\Omega_m$ to be a square centered at the origin with its edges at $x = \pm R_m$, $y = \pm R_m$ with $R_m = \pi\left(m+\frac{1}{2}\right)$, and we extend Eq.~(\ref{eq:I-integral-def}) by adding an index to label the numerator or denominator,
	\begin{equation} \label{eq:I-integral-extension}
		I_{(S,m)}^{(num,j)} = \int_{\Gamma_m^{(S)}} dw \frac{1}{w^{j+1}} \frac{\cos w}{\sin^2 w} \, , \quad
		I_{(S,m)}^{(den,j)} = \int_{\Gamma_m^{(S)}} dw \frac{1}{w^{j+1}} \frac{1}{\sin^2 w} \, .
	\end{equation}
	By the parity argument of Prop.~\ref{prop:analytic-remainder-parity-argument}, it suffices to calculate these integrals for only the right and top sides, $S=R,T$. We will show that $I_{(S,m)}^{(num,j)}$ and $I_{(S,m)}^{(den,j)}$ vanish for both these sides, and hence the integral along the entire contour vanishes, for all $j$.
	
	Using $R_m = \left(m+\frac{1}{2}\right)\pi$ and regular and hyperbolic trigonometric identities, we can show that
	\begin{equation}\begin{aligned} \label{eq:cosine-inequalities}
		\left|\cos\left(R_m + i y\right)\right| = \left|\sinh y\right| < e^{|y|} \, ,
		&\quad \left|\sin\left(R_m + i y\right)\right|^2 = \cosh^2 y > \frac{1}{4} e^{2|y|} \, , \\
		\left|\cos\left(x + i R_m\right)\right| \le \cosh R_m \, ,
		& \quad \left|\sin\left(x + i R_m\right)\right|^2 \ge \sinh^2 R_m  \, .
	\end{aligned}\end{equation}
	We must now evaluate the integrals of Eq.~(\ref{eq:I-integral-extension}) in four cases, the right and top sides of the contour for each of the numerator $\frac{\cos z}{\sin^2 z}$ and denominator $\frac{1}{\sin^2 z}$. Using the triangle inequality,
	\begin{align}
		\left|I_{(R,m)}^{(num,j)}\right|
		& = \left|\int_{\Gamma_m^{(R)}} dw \frac{1}{w^{j+1}} \frac{\cos w}{\sin^2 w}\right|
		< \frac{1}{R_m^{j+1}} \int_{-R_m}^{R_m} dy \frac{e^{|y|}}{\frac{1}{4}e^{2|y|}}
		< \frac{8}{R_m^{j+1}} \, , \\
		\left|I_{(R,m)}^{(den,j)}\right|
		& = \left|\int_{\Gamma_m^{(R)}} dw \frac{1}{w^{j+1}} \frac{1}{\sin^2 w}\right|
		< \frac{1}{R_m^{j+1}} \int_{-R_m}^{R_m} dy \frac{1}{\frac{1}{4}e^{2|y|}}
		< \frac{4}{R_m^{j+1}} \, , \\
		\left|I_{(T,m)}^{(num,j)}\right|
		& = \left|\int_{\Gamma_m^{(T)}} \frac{1}{w^{j+1}} dw \frac{\cos w}{\sin^2 w}\right|
		\le \frac{2}{R_m^j} \frac{\cosh R_m}{\sinh^2 R_m} \, , \label{eq:cos-2-num-top} \\
		\left|I_{(T,m)}^{(den,j)}\right|
		& = \left|\int_{\Gamma_m^{(T)}} \frac{1}{w^{j+1}} dw \frac{1}{\sin^2 w}\right|
		\le \frac{2}{R_m^j} \frac{1}{\sinh^2 R_m} \, , \label{eq:cos-2-den-top}
	\end{align}
	Each of these vanishes in the limit $m\to\infty$ regardless of $j$. Therefore the integral of the entire contour vanishes in the limit
	\begin{equation}
		\lim_{m \to \infty} \oint_{\partial\Omega_m} dw \frac{1}{w^{j+1}} \frac{\cos w}{\sin^2 w} = \lim_{m \to \infty} \oint_{\partial\Omega_m} dw \frac{1}{w^{j+1}} \frac{1}{\sin^2 w} = 0
	\end{equation}
	for all $j$, so that the analytic remainders for both the numerator and the denominator vanish and only the principal parts remain,
	\begin{equation}\begin{aligned} \label{eq:cosine-barycentric-proof}
		\cos(z) & = \lim_{m \to \infty} \frac{ \sum_{k=-m}^{m} \frac{(-1)^k}{(z-k\pi)^2} + \sum_{j=0}^\infty z^j \left( \frac{1}{2 \pi i} \oint_{\partial\Omega_m} dw \frac{1}{w^{j+1}} \frac{\cos(z)}{\sin^2 (z)} \right)}{ \sum_{k=-m}^{m} \frac{1}{(z-k\pi)^2} + \sum_{j=0}^\infty z^j \left( \frac{1}{2 \pi i} \oint_{\partial\Omega_m} dw \frac{1}{w^{j+1}} \frac{1}{\sin^2 (z)} \right)} \\
		& = \frac{ \sum_{k=-\infty}^{\infty} \frac{(-1)^k}{(z-k\pi)^2} }{ \sum_{k=-\infty}^{\infty} \frac{1}{(z-k\pi)^2} } \, .
	\end{aligned}\end{equation}
	
	If we were to take only a finite sum in the first line of Eq.~\ref{eq:cosine-barycentric-proof}, then according to Prop.~\ref{prop:F-error-convergence-rate}, the convergence of the finite approximation would be given by the most slowly converging of any of the analytic remainder integrals. In this case, both the numerator and denominator converge as $R_m^{-1}$. We will illustrate the convergence of this interpolation formula alongside other possible interpolation formulae for $\cos(z)$ in the next section.
		
	\subsection{Additional barycentric expansions of cosine} \label{sec:different-sinusoids}
	
	To construct different barycentric interpolation formulae for the same function, we choose a different weight function, and there are two natural choices to explore next. First, we might adjust the power of the weight function, which will fix higher-order derivatives at each node as showed in Prop.~\ref{prop:anchor-point-fixed-derivative}. Second, we might choose a different set of nodes.
	
	In the following sections, we will explore each of these possibilities and compare the convergence of finite approximations of these interpolations as more nodes are included.
	The calculations for their Laurent series principal parts and analytic remainders are essentially identical to those we have already performed, so we will omit them.
	
	\subsubsection{Varying powers of the weight function} \label{sec:cos-denom-power-weights}
	
	If we modify Eq.~(\ref{eq:cosine-barycentric-idea}) to
	\begin{equation} \label{eq:weighting-scheme-denom}
		F^p (z) = \frac{ \sfrac{\cos(z)}{\left(\frac{d}{dz}\cos(z)\right)^p} }{ \sfrac{1}{\left(\frac{d}{dz}\cos(z)\right)^p} } \, ,
	\end{equation}
	we obtain the following three expansions.
	\begin{subequations}\begin{align}
		\cos z & = \frac{\sfrac{\cos z}{\sin z}}{\sfrac{1}{\sin z}} = \frac{\sum_{k=-\infty}^{\infty} \frac{1}{z-k\pi} }{ \sum_{k=-\infty}^{\infty} \frac{(-1)^k}{z-k\pi} } \, , & p = 1 \, , \\
		\cos z & = \frac{\sfrac{\cos z}{\sin^2 z}}{\sfrac{1}{\sin^2 z}} = \frac{ \sum_{k=-\infty}^{\infty} \frac{(-1)^k}{(z-k\pi)^2} }{ \sum_{k=-\infty}^{\infty} \frac{1}{(z-k\pi)^2} } \, , & p = 2 \, , \\
		\cos z & = \frac{\sfrac{\cos z}{\sin^3 z}}{\sfrac{1}{\sin^3 z}} = \frac{\sum_{k=-\infty}^{\infty} \left( \frac{1}{(z-k\pi)^3} - \frac{1}{15}\frac{1}{z-k\pi} \right)}{ \sum_{k=-\infty}^{\infty} \left( \frac{(-1)^k}{(z-k\pi)^3} + \frac{1}{2}\frac{(-1)^k}{z-k\pi} \right)} \, , \quad\quad & p = 3 \, .
	\end{align}\end{subequations}
	We have restated the $p=2$ case for completeness, and we have disregarded the negative sign from $\frac{d}{dz} \cos z = - \sin z$ for $p=1,3$ since they always cancel between the numerator and denominator.
	
	%\begin{figure}
	%	\centering
	%	% The subfigures have 3:1 and 1:1 aspect ratios; the sizing below prints them with 5% textwidth spacing between them.
	%	\begin{subfigure}[t]{0.7125\textwidth}
	%		\includegraphics[height=0.333333\linewidth]{img/cosine_with_p_comparison.png}
	%		\caption{Absolute error with $z$ in the interpolation interval}
	%	\end{subfigure}\hfill%
	%	\begin{subfigure}[t]{0.2375\textwidth}
	%		\includegraphics[height=\linewidth]{img/cosine_with_p_convergence.png}
	%		\caption{Square root of mean square error}
	%	\end{subfigure}
	%	\caption{Here we compare the absolute error for approximations of $F^p (z) = \frac{\sfrac{\cos z}{\sin^p z}}{\sfrac{1}{\sin^p z}}$ for $p=1,2,3$. Each approximation uses only the nodes $z_k$ for $-m \le k \le m$, i.e., from $-\pi m$ to $\pi m$.}
	%	\label{fig:cosine_den_pow_comparison}
	%\end{figure}
	\begin{figure}[h]
		\centering
		% The subfigures have 3:1 and 1:1 aspect ratios; the sizing below prints them with 5% textwidth spacing between them.
		\begin{subfigure}[t]{\textwidth}
			\centering
			\includegraphics[width=\linewidth]{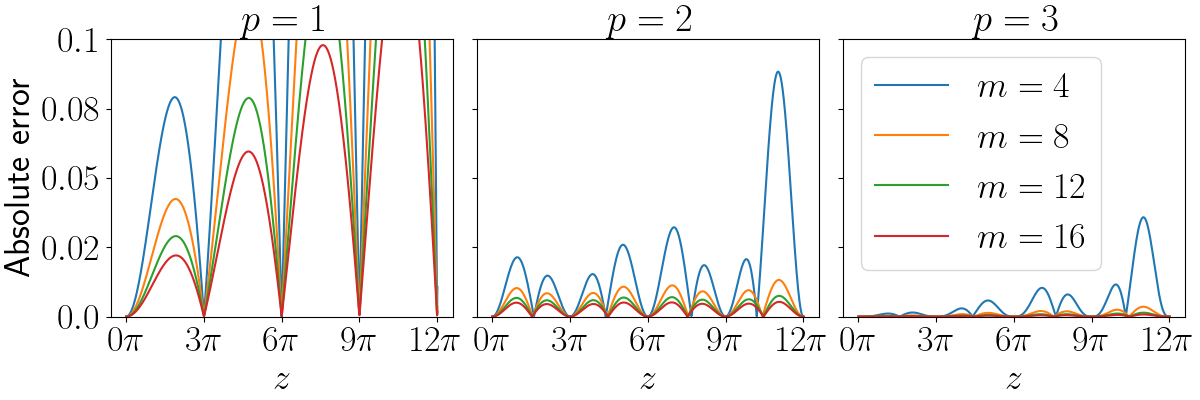}
			\caption{Absolute error with $z$ in the interpolation interval}
		\end{subfigure}%
		\vspace{0.025\linewidth}
		\begin{subfigure}[t]{\textwidth}
			\centering
			\includegraphics[width=0.6\linewidth]{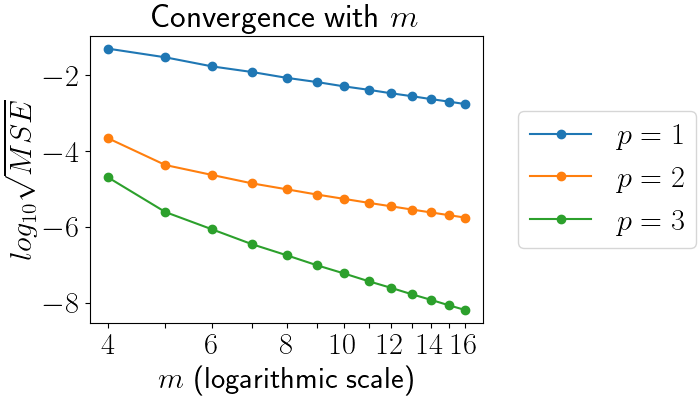}
			\caption{Square root of mean square error}
		\end{subfigure}
		\caption{Here we compare the absolute error and mean square error integrated over the same interpolation interval for approximations of $F^p (z) = \frac{\sfrac{\cos z}{\sin^p z}}{\sfrac{1}{\sin^p z}}$ for $p=1,2,3$. Each approximation uses only the nodes $z_k$ for $-m \le k \le m$, i.e., from $-\pi m$ to $\pi m$.}
		\label{fig:cosine_den_pow_comparison}
	\end{figure}
	
	As $p$ increases and raises the order of the pole at each node, the accuracy of the expansion increases in exchange for the expression becoming more complicated, with a longer Laurent principal part.
	Fig.~\ref{fig:cosine_den_pow_comparison} compares the accuracy of these three with $z$ on the real axis and shows their convergence as more nodes are added.
	These plots demonstrates not only a decrease in overall error with increasing $p$, which is due to the error-suppression effect discussed in Remark~\ref{remark:G-power-error-suppression}, but also a change of error scaling with $m$. $p=1$ and $p=2$ converge as $R_m^{-1}$, whereas $p=3$ converges as $R_m^{-2}$. (Since $R_m \propto m$, the scaling for $m$ is equivalent to the scaling for $R_m$.)
	
	As Prop.~\ref{prop:F-error-convergence-rate} shows, the overall error follows the scaling of the error in the most slowly-converging analytic remainder integral. For brevity, we will only focus on the most interesting case, $p=3$, for which the four analytic remainder integrals we must estimate are
	\begin{equation}\begin{aligned}
			I_{(S,m)}^{(num,j)} = \int_{\Gamma_m^{(S)}} dw \frac{1}{w^{j+1}} \frac{\cos z}{\sin^3 z} \, , \quad\quad I_{(S,m)}^{(den,j)} = \int_{\Gamma_m^{(S)}} dw \frac{1}{w^{j+1}} \frac{1}{\sin^3 z}
	\end{aligned}\end{equation}
	for each side $S$.
	Both $\sfrac{\cos z}{\sin^3 z}$ and $\sfrac{1}{\sin^3 z}$ decay quickly enough to $0$ as $|z|\to\infty$ that in both cases $\lim_{m\to\infty} \left| \int_{\Gamma_m^{(S)}} f(w) \right|$ is a constant. Therefore, we can isolate the factor of $\sfrac{1}{w^{j+1}}$, which sets the scaling:
	\begin{equation}
		\left|I_{(S,m)}^{(j)}\right|
		= \left| \int_{\Gamma_m^{(S)}} \frac{f(w)}{w^{j+1}} \right|
		< \frac{1}{R_m^{j+1}} \left| \int_{\Gamma_m^{(S)}} f(w) \right|
		= O\left(R_m^{-(j+1)}\right)
	\end{equation}
	The $j=0$ terms vanish and and do not contribute by the parity argument of Prop.~\ref{prop:analytic-remainder-parity-argument} since $0$ is even and both $\sfrac{\cos z}{\sin^3 z}$ and $\sfrac{1}{\sin^3 z}$ are odd.
	The lowest-order integrals that do not vanish are the $j=1$ terms, which converge as $R_m^{-2}$, as claimed.
	
	We have focused our attention inside the interpolation region. It is also worth mentioning the behavior of each approximant outside the interpolation region as well. The limiting behavior is set by the lowest-order nonvanishing terms in the Laurent series principal parts, and the present cases have the limiting behavior
	\begin{subequations} \label{eq:cos-den-limiting-behavior} \begin{align}
		\lim_{|z|\to\infty} F_m^1 (z) & = (-1)^m (2m+1) \, , \\
		\lim_{|z|\to\infty} F_m^2 (z) & = \frac{(-1)^m}{2m+1} \, , \\
		\lim_{|z|\to\infty} F_m^3 (z) & = (-1)^{m+1} \frac{2}{15} (2m+1) \, .
	\end{align}\end{subequations}
	These show that the large-$|z|$ limit is not necessarily stable as $m$ grows. Where global accuracy including outside the interpolation region is relevant, this also should be considered when choosing a weight function.
	
	\subsubsection{Weighting at different nodes} \label{sec:cos-denom-not-extrema}
	
	So far we have only explored barycentric expansions resulting from anchoring at extrema. Here we also consider anchoring at zeros, at both extrema and zeros together, and at the points at midpoints between extrema and zeros, all for both $p=1$ and $p=2$. The expressions are not particularly illuminating, so we defer them to Appendix~\ref{sec:cosine-different-anchors} and only study their properties here.
	
	\begin{figure}[b!]
		\centering
		\begin{subfigure}{\textwidth}
			\centering
			\includegraphics[width=\linewidth]{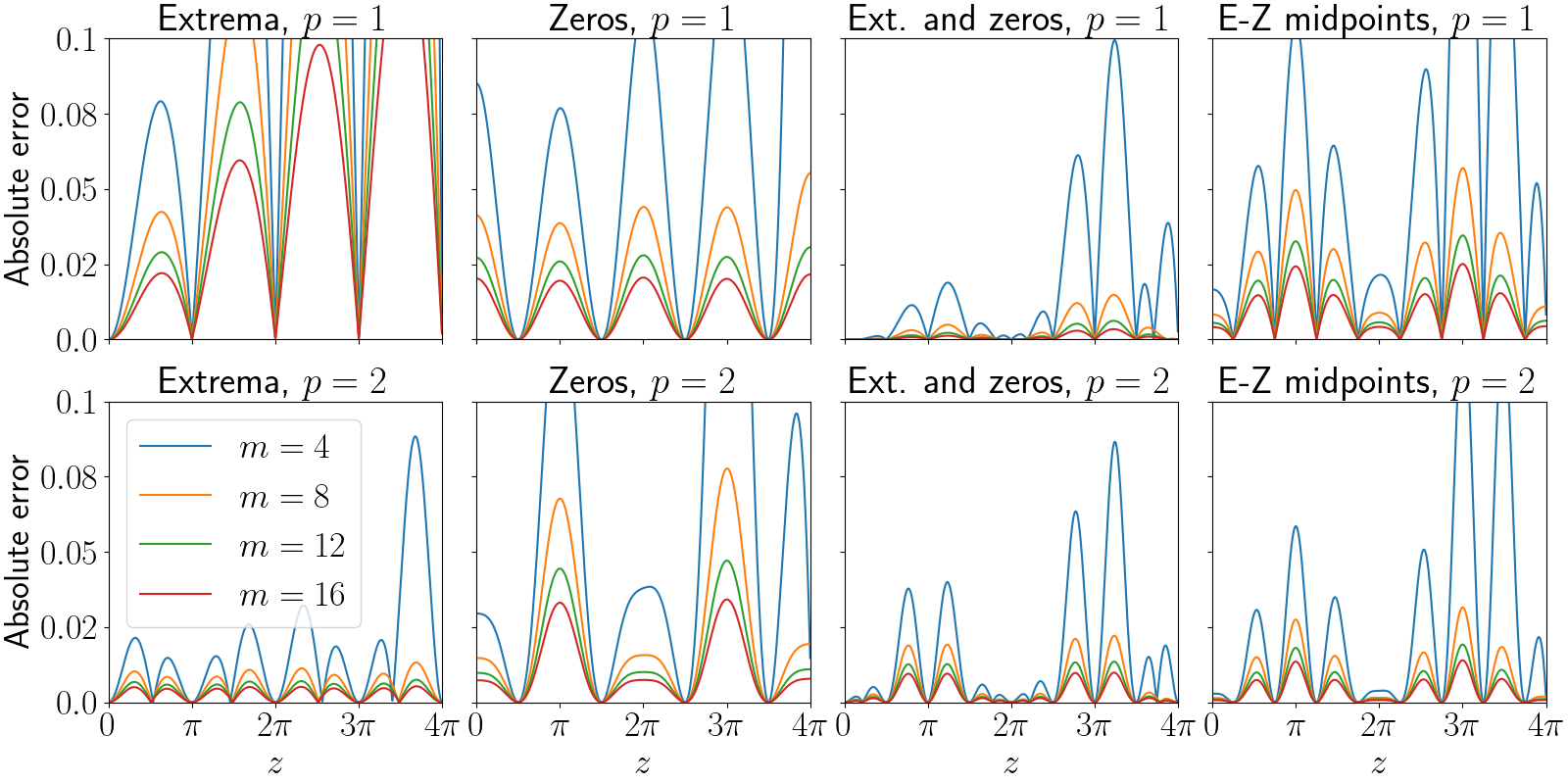}
			\caption{Absolute error as a function $z$ inside the interpolation interval}
		\end{subfigure}%
		\vspace{0.025\linewidth}
		\begin{subfigure}{\textwidth}
			\centering
			\includegraphics[width=\linewidth]{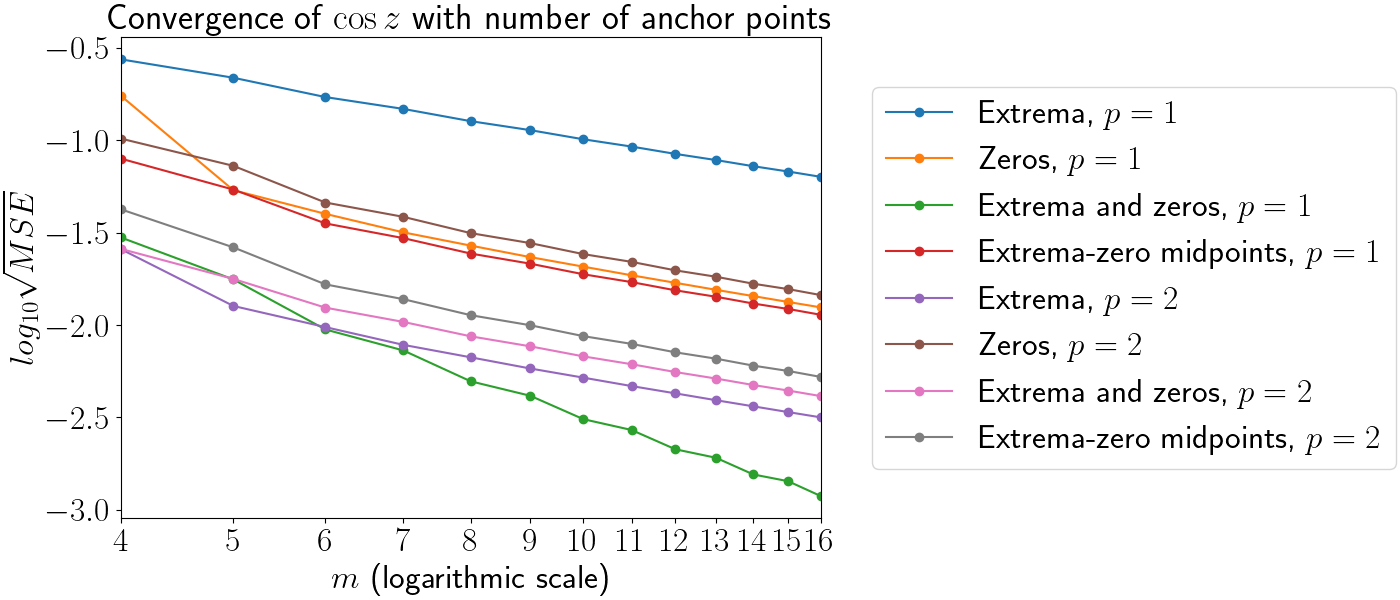}
			\caption{Square root of mean square error over the same interval}
		\end{subfigure}
		\caption{Here we compare interpolations of $\cos(z)$ for different choices of nodes and for differing numbers of nodes included. These are indexed by $m$ such that the interpolation includes all nodes between $-\pi m$ and $\pi m$. As before, $p$ is the power to which the weight function is raised.}
	\label{fig:cosine_anchor_points_comparison}
	\end{figure}
	
	Fig.~\ref{fig:cosine_anchor_points_comparison} compares the accuracy of these inside the interpolation interval as the number of nodes varies. Convergence occurs quickly enough that the interpolation formula with the best scaling is the most accurate unless relatively few nodes are used.
	The $p=1$ interpolation weighted at extrema and zeros has better scaling than the other expressions for the same reason as did the $p=3$ interpolation weighted at extrema in the preceding section.
	Namely, both $\sfrac{\cos z}{\sin 2z}$ and $\sfrac{1}{\sin 2z}$ are odd and tend to $0$ quickly for large $z$. Therefore the $j$th term in the error scales as $R_m^{-(j+1)}$, but $j=0$ term vanishes so that the lowest-order term is $R_m^{-2}$. The $p=1$ interpolation anchored at extrema and zeros is the only one of these expressions that satisfies these properties and hence is the only one that shows this scaling.
	This example illustrates that symmetry (or antisymmetry) is an important factor in producing efficient interpolations. Another important factor which we have not explored here is the spacing of nodes, which affects how $R_m$ scales with $m$.
	
	While Remark~\ref{remark:G-power-error-suppression} suggests that raising the weight function to a higher power should reduce error, the $p=1$ and $p=2$ interpolations weighted at zeros show an exception, even though both have the same error scaling as $R_m^{-1}$.
	The reason is that the numerator for the $p=1$ case is just $f^{(num)}(z) = \sfrac{\cos z}{\cos z} = 1$. There are clearly no poles here to expand into a sum over Laurent series principal parts. However, truncating the sums over Laurent series principal parts ie exactly how we approximate $f^{(num)}(z)$ by $f_m^{(num)}(z)$ in the first place in order to construct approximate barycentric interpolations using Eq.~(\ref{eq:gen-bary-rat-interp-approx}). If there are no terms to truncate anyway, then the "approximate" expansion is always exact, and the numerator does not contribute anything to the error of the overall interpolation. That is, $e_m^{(num)}(z) = 0$ always in Eq.~(\ref{eq:F-error-with-e}). Here we have focused only on analytical and numerical properties; whether $\cos z = \frac{\sfrac{\cos z}{\cos z}}{\sfrac{1}{\cos z}} = \frac{1}{\sfrac{1}{\cos z}}$ can truly be called "barycentric" with no weight-like terms in the numerator is a separate question.
	
	\section{Barycentric rational expansions for the Bessel functions} \label{sec:bessel-barycentric}
	
	Now let us generate barycentric rational expansions for the more novel and interesting case of the Bessel functions of the first kind, $J_q (z)$, for integer order $q$. Based on our observations for $\cos z$ in Sec.~\ref{sec:different-sinusoids}, we will derive and compare interpolations weighted at extrema, zeros, or both extrema and zeros for $p=1,2$.
	
	As we will show in Sec.~\ref{sec:bessel-results}, the best-performing interpolation is again that weighted at extrema with $p=2$. For $q=0$, the result has the form
	\begin{equation}
		J_0(z) = \frac{\frac{4}{z^2} + \sum_{k=-\infty, k\ne0}^\infty \left(\frac{a_{-2}^{(k,num)}}{\left(z-z_k\right)^2} + \frac{a_{-1}^{(k,num)}}{z-z_k}\right)}
		{1 + \frac{4}{z^2} + \sum_{k=-\infty,k\ne0}^\infty \left(\frac{a_{-2}^{(k,den)}}{\left(z-z_k\right)^2} + \frac{a_{-1}^{(k,den)}}{z-z_k}\right) } \, .
	\end{equation}
	Fig.~\ref{fig:bessel-interpolation-example} shows approximate interpolations resulting from finite  versions of this expansion.
	The values of the Laurent coefficients and the expansions for other orders $q$ are given in Sec.~\ref{sec:bessel-results}, and expansions for other weightings are given in Appendix~\ref{sec:bessel-expansions-calculation}.
	
	\begin{figure}
		\centering
		\includegraphics[width=0.75\linewidth]{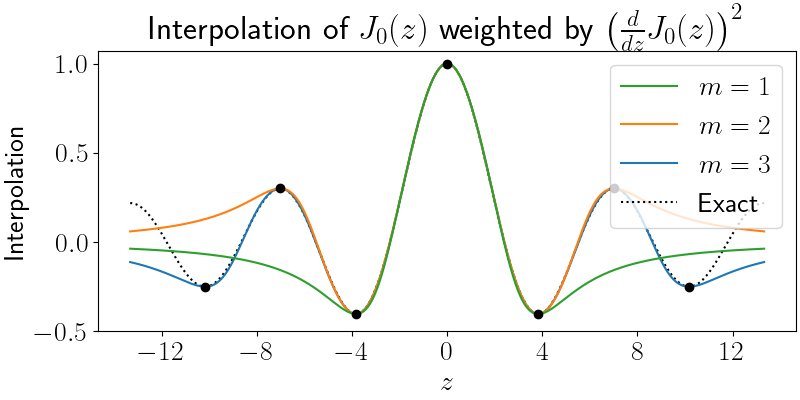}
		\caption{Here we plot an interpolation based off an expansion of $J_0(z) = \frac{\sfrac{J_0(z)}{\left(\frac{d}{dz}J_0(z)\right)^2}}{\sfrac{1}{\left(\frac{d}{dz}J_0(z)\right)^2}}$ and its evolution as more nodes are added. Each curve includes the node $z_0 = 0$ and the next $N$ nodes for a given $N$.}
		\label{fig:bessel-interpolation-example}
	\end{figure}
	
	The first step in producing the barycentric expansions is calculating Laurent series principal parts. Unlike the poles in the various cosine expansions in Sec.~\ref{sec:cos-barycentric}, for the Bessel functions, not all poles are of the same order. The order of the zero of $J_q(z)$ at $z=0$ depends on the index $q$, and hence so does the order of the pole at $z_0=0$. The zero and pole orders for each choice of weight function are given in Table~\ref{tbl:bessel-pole-orders-at-0}.
	
	\begin{table}
		\centering
		\begin{tabular}{c | c | c | c}
			\multicolumn{4}{c}{Orders of zeros and poles at $z=0$} \\
			\hline\hline
			& $q=0$ & $q=1$ & $q\ge2$ \\
			\hline\hline
			$J_q(z)$ zero order & $0$ & $1$ & $q$ \\
			\hline
			$\frac{d}{dz}J_q(z)$ zero order & $1$ & $0$ & $q-1$ \\
			\hline\hline
			$\sfrac{J_q (z)}{\frac{d}{dz}J_q (z)}$ pole order & $1$ & $0$ & $0$ \\
			\hline
			$\sfrac{1}{\frac{d}{dz}J_q (z)}$ pole order & $1$ & $0$ & $q-1$ \\
			\hline\hline
			$\sfrac{J_q (z)}{\left(\frac{d}{dz}J_q (z)\right)^2}$ pole order & $2$ & $0$ & $q-2$ \\
			\hline
			$\sfrac{1}{\left(\frac{d}{dz}J_q (z)\right)^2}$ pole order & $2$ & $0$ & $2q-2$ \\
			\hline\hline
			$\sfrac{1}{J_q (z)}$ pole order & $0$ & $1$ & $q$ \\
			\hline
			$\sfrac{1}{\left(J_q (z)\right)^2}$ pole order & $0$ & $2$ & $2q$ \\
			\hline\hline
			$\sfrac{1}{J_q (z)\frac{d}{dz}J_q (z)}$ pole order & $1$ & $1$ & $2q-1$
		\end{tabular}
		\caption{Whether or not $z=0$ is a pole of these functions depends on the order $q$ of the Bessel function. Here, "order $0$" for a zero or pole means the absence of the zero or pole.}
		\label{tbl:bessel-pole-orders-at-0}
	\end{table}
	
	Second, we must evaluate the analytic remainder integrals. We again choose a sequence of easy-to-integrate square regions $\Omega_m$ centered on the origin with sides at $x,y = \pm R_m$, this time with
	\begin{equation}\begin{aligned} \label{eq:bessel-Rm}
		R_m = \tfrac{\pi}{2}\left(2m+q+\tfrac{3}{2}\right) , \quad\quad & \text{when weighting at extrema} , \\
		R_m = \tfrac{\pi}{2}\left(2m+q+\tfrac{5}{2}\right) , \quad\quad & \text{when weighting at zeros} , \\
		R_m = \tfrac{\pi}{2}\left(m+q+1\right) , \quad\quad & \text{when weighting at both} .
	\end{aligned}\end{equation}
	For sufficiently large $m$, with these choices $R_m$ is roughly the midpoint between successive extrema or zeros, respectively, of $J_q(z)$. This follows from the asymptotic expansions
	\begin{equation} \label{eq:bessel-asymptotic}
		\begin{array}{rr}
			& J_q(z) = \sqrt{\frac{2}{\pi z}}\left(\cos\left(z \! - \! \frac{\pi}{2}q \! - \! \frac{\pi}{4}\right)+e^{\left|\text{Im}(z)\right|}O\left(|z|^{-1}\right)\right) \\
			& \frac{d}{dz} J_q(z) = - \sqrt{\frac{2}{\pi z}}\left(\sin\left(z \! - \! \frac{\pi}{2}q \! - \! \frac{\pi}{4}\right)+e^{\left|\text{Im}(z)\right|}O\left(|z|^{-1}\right)\right)
		\end{array}
		\,\, |\arg(z)|<\pi \, ,
	\end{equation}
	which we will use in evaluating the contour integrals for the analytic remainder.\cite{noauthor_nist_2025} (Since $R_m \le |z| \le \sqrt{2} R_m$ for all $z \in \partial\Omega_m$, we can always replace $O\left(|z|^\alpha\right)$ in the error term with $O\left(R_m^\alpha\right)$.) The latter follows from the former when we use the derivative recurrence relation
	\begin{equation} \label{eq:bessel-recurrence}
		\frac{d}{dz} J_q (z) = \frac{1}{2}\left(J_{q-1}(z)-J_{q+1}(z)\right)\, .
	\end{equation}
	Finally, due to the parity of the Bessel functions for integer order $q$,
	\begin{equation} \label{eq:bessel-parity}
		J_q (-z) = (-1)^q J_q (z) \, ,
	\end{equation}
	we can use (anti)symmetry to make Eq.~(\ref{eq:bessel-asymptotic}) useful regardless of $\arg(z)$. This parity relation will also allow us to make use of the parity argument of Prop.~\ref{prop:analytic-remainder-parity-argument} to simplify the evaluation of certain analytic remainder integrals.
	
	Calculating the Laurent series principal parts and analytic remainders is more involved for the Bessel functions than for cosine, se we outline the procedure in the following sections and defer the details to Appendix~\ref{sec:bessel-expansions-calculation}.
	
	\subsection{Calculating the Laurent series principal parts} \label{sec:bessel-laurent}
	
	First we calculate the principal parts of the Laurent expansions. In general, one can find the the Laurent principal part coefficients $a_{-n}^{(k)}$ of a ratio of analytic functions of the form $\sfrac{c(z)}{b(z)}$ (e.g., $\sfrac{J_q(z)}{\frac{d}{dz}J_q(z)}$) by expanding them in power series about each node $z_k$ and matching like terms. To simplify notation, in the following, we will suppress the node index $k$ in all coefficients and shift $z-z_k \to z$ so that the expansion is in powers of $z$.
	
	Letting $j_0$ and $j_*$ be the respective orders of the zeros of $c(z)$ and $b(z)$ at $z=z_k$ and expanding the functions in power series with coefficients $c_j$ and $b_j$, we obtain a matrix equation for the Laurent principal part coefficients $a_{-j}$ as follows:
	\begin{align} \label{eq:laurent-coeff-matrix-equation-first-order-denom}
		& \frac{\sum_{j=j_0}^\infty c_j z^j}{\sum_{j=j_*}^\infty b_j z^j} = \frac{c(z)}{b(z)} = \sum_{j=1}^{j_*-j_0} \frac{a_{-j}}{z^j} + O(1) \nonumber \\
		& \implies \quad \sum_{j=j_0}^\infty c_j z^j = \sum_{j=j_*}^\infty \sum_{j'=1}^{j_*-j_0} a_{-j'} b_j z^{j-j'} + O\left(z^{j_*}\right) \\
		& \implies
		\begin{bmatrix}
			c_{j_0} \\ c_{j_0+1} \\ \ldots \\ c_{j_*-2} \\ c_{j_*-1}
		\end{bmatrix}
		=
		\begin{bmatrix}
			b_{j_*} & 0 & \ldots & 0 & 0 \\
			b_{j_*+1} & b_{j_*} & \ldots & 0 & 0 \\
			\ldots & \ldots & \ldots & \ldots & \ldots \\
			b_{2j_*-j_0-2} & b_{2j_*-j_0-3} & \ldots & b_{j_*} & 0 \\
			b_{2j_*-j_0-1} & b_{2j_*-j_0-2} & \ldots & b_{j_*+1} & b_{j_*} \\
		\end{bmatrix}
		\begin{bmatrix}
			a_{-\left(j_*-j_0\right)} \\ a_{-\left(j_*-j_0-1\right)} \\ \ldots \\ a_{-2} \\ a_{-1}
		\end{bmatrix} \nonumber \, .
	\end{align}
	In the second step we multiplied through by the denominator and in the third step we matched powers of $z$ from $z^{j_0}$ to $z^{j_*-1}$. We can write this matrix equation succinctly as
	\begin{equation} \label{eq:laurent-coeff-mat-eq-simple}
		c = B a
	\end{equation}
	where the vector and matrix elements $[a]_j$, $[B]_{j,j'}$, $[c]_j$ are given by
	\begin{align}
		\begin{matrix*}
			\left[a\right]_j & = & a_{-(j_* - j_0) + j} \, , \\
			\left[B\right]_{j,j'} & = & b_{j_* + j - j'} \Theta\left(j-j'\right) \, , \\
			\left[c\right]_j & = & c_{j_0 + j} \, ,
		\end{matrix*}
		\left.\vphantom{\begin{matrix*}
				\left[a\right]_j & = & a_{-(j_* - j_0) + j} \, , \\
				\left[B\right]_{j,j'} & = & b_{j_* + j - j'} \Theta\left(j-j'\right) \, , \\
				\left[c\right]_j & = & c_{j_0 + j} \, ,
		\end{matrix*}}\right\}
		\quad 0 \le j,j' < j_* - j_0 - 1
	\end{align}
	Note that we use the convention that the vector and matrix indices start at $0$ rather than $1$, and that brackets distinguish the matrix elements from the power series coefficients.
	The power series coefficients $c_j$ and $b_j$ are known, so the last step is simply to solve the matrix equation for the unknown vector of Laurent principal part coefficients $a$.
	
	The formulation above is best suited to the $p=1$ cases for extrema or zeros. For the remaining cases, when the denominator is a product of simpler functions, we obtain in the same way
	\begin{equation}\begin{aligned} \label{eq:laurent-coeff-matrix-equation-second-order-denom}
		& \frac{\sum_{j=j_0}^\infty c_j z^j}{\left(\sum_{j=j_{*0}}^\infty b_j^{(0)} z^j\right)\left(\sum_{j=j_{*1}}^\infty b_j^{(1)} z^j\right)}
		= \frac{c(z)}{b_0(z)b_1(z)}
		= \sum_{j=1}^{j_*-j_0} \frac{a_{-j}}{z^j} + O(1) \\
		& \quad\quad\quad \implies c = B_0 B_1 a
	\end{aligned}\end{equation}
	with  vector and matrix elements given by
	\begin{align} \label{eq:laurent-coeff-matrix-entries-second-order-denom}
		\begin{matrix*}
			\left[a\right]_j & = & a_{-(j_{*0} + j_{*1} - j_0) + j} \, , \\
			\left[B_d\right]_{j,j'} & = & b_{j_{*d} + j - j'}^{(d)} \Theta\left(j-j'\right) \, , \\
			\left[c\right]_j & = & c_{j_0 + j} \, ,
		\end{matrix*}
		\left.\vphantom{\begin{matrix*}
				\left[a\right]_j & = & a_{-(2j_* - j_0) + j} \, , \\
				\left[B_d\right]_{j,j'} & = & b_{j_{*d} + j - j'}^{(d)} \Theta\left(j-j'\right) \, , \\
				\left[c\right]_j & = & c_{j_0 + j} \, ,
		\end{matrix*}}\right\}
		\quad
		\begin{matrix*}
			0 \le j,j' < j_{*0} + j_{*1} - j_0 - 1 \\
			d = 0, 1
		\end{matrix*}
	\end{align}
	
	For most of the Bessel function expansions we present here, the dimension of the system of equations scales with $q$ at $z_0=0$ and is only $1$ or $2$ at all other nodes. Hence, the Laurent principal part coefficients can easily be obtained in closed form for all nodes $z_k$ except $z_0$, for which the coefficients are best found computationally.
	
	\subsection{Calculating the analytic remainders} \label{sec:bessel-analytic-remainders}
	
	As was the case for $\cos z$ in Sec.~\ref{sec:first-cos-expansion}, due to the symmetry of the Bessel functions as given in Eq.~(\ref{eq:bessel-parity}) we only need to calculate the right side and top side integrals. We can perform these integrals in a roughly similar way after expanding the Bessel functions using their asymptotic expansions. As concrete examples, we will calculate two integrals for weighting at the extrema, one with $p=1$ and one with $p=2$. The other integrals may be evaluated using similar methods to these two and we give them in Appendix~\ref{sec:bessel-expansions-calculation}.
	
	First, we calculate
	\begin{equation} \label{eq:bessel-example-integral-1}
		I_{(R,m)}^{(num,j)} = \int_{\Gamma_m^{(R)}} dw \frac{1}{w^{j+1}} \frac{J_q(w)}{\frac{d}{dw}J_q(w)} \, ,
	\end{equation}
	from the numerator for the expansion based on $\frac{\sfrac{J_q(z)}{\frac{d}{dz}J_q(z)}}{\sfrac{1}{\frac{d}{dz}J_q(z)}}$, weighted at extrema with $p=1$. With $R_m = \frac{\pi}{2}\left(2m+q+\frac{3}{2}\right)$ as chosen in Eq.~(\ref{eq:bessel-Rm}),
	\begin{equation}\begin{aligned} \label{eq:bessel-right-side-simplification-example}
			\sin\left(R_m+iy-\frac{\pi}{2}q-\frac{\pi}{4}\right)
			& = (-1)^m \cosh y \, , \\
			\cos\left(R_m+iy-\frac{\pi}{2}q-\frac{\pi}{4}\right)
			& = -i(-1)^m \sinh y \,.
	\end{aligned}\end{equation}
	Using these in the asymptotic forms of Eq.~(\ref{eq:bessel-asymptotic}) and expanding in a geometric series, we have
	\begin{align}
		\left.\frac{1}{\frac{d}{dw}J_q(w)}\right|_{R_m+iy}
		& = -\sqrt{\frac{\pi z}{2}} \frac{1}{(-1)^m \cosh y+e^{\left|y\right|}O\left(|R_m+iy|^{-1}\right)} \nonumber \\
		& = -\sqrt{\frac{\pi \left(R_m+iy\right)}{2}} \frac{(-1)^m}{\cosh y}\left(1 + O\left(R_m^{-1}\right)\right) \, , \label{eq:bessel-geometric-series-right-side-example} \\
		\left.\frac{J_q(w)}{\frac{d}{dw}J_q(w)}\right|_{R_m+iy}
		& = i \tanh y + O\left(R_m^{-1}\right) \, . \label{eq:bessel-geo-series-right-num-example}
	\end{align}
	In Eq.~(\ref{eq:bessel-geometric-series-right-side-example}), we used the asymptotic expansion of $\frac{d}{dz}J_q(z)$, and the validity of the geometric series follows for sufficiently large $R_m$ from $\cosh y \ge \frac{1}{2}e^{|y|}$.
	Eq.~(\ref{eq:bessel-geo-series-right-num-example}) then follows by multiplying Eq.~(\ref{eq:bessel-geometric-series-right-side-example}) the asymptotic expansion for $J_q(z)$. We have also consolidated all higher-order terms in the expansion with the lowest-order bound $O\left(R_m^{-1}\right)$ as this is sufficient for our needs.
	
	To evaluate the integral, we apply the ML inequality together with $\left|\tanh y\right| \le 1$ to see that
	\begin{equation}
		\left|I_{(R,m)}^{(num,j)}\right|
		= \left|\int_{\Gamma_m^{(R)}} dw \frac{1}{w^{j+1}} \frac{J_q(w)}{\frac{d}{dw}J_q(w)}\right|
		\le \frac{2}{R_m^j} + O\left(R_m^{-j-1}\right) \, , 
	\end{equation}
	The entire term vanishes for $j\ge1$ and the error term vanishes for all $j$. The $j=0$ case must be treated individually, but since here the integrand $\sfrac{J_q(w)}{\frac{d}{dw}J_q(w)}$ is odd while $j$ is even, by the parity argument of Prop.~\ref{prop:analytic-remainder-parity-argument}, $I_{(R,m)}^{(num,j)}$ cancels with $I_{(L,m)}^{(num,j)}$ so that this integral does not contribute to the analytic remainder.
	
	For the second example, we calculate
	\begin{equation} \label{eq:bessel-example-integral-2}
			I_{(S,m)}^{(den,j)} = \int_{\Gamma_m^{(S)}} dw \frac{1}{w^{j+1}} \frac{1}{\left(\frac{d}{dw}J_q(w)\right)^2} \, .
	\end{equation}
	from the denominator for the expansion based on $\frac{\sfrac{J_q(z)}{\left(\frac{d}{dz}J_q(z)\right)^2}}{\sfrac{1}{\left(\frac{d}{dz}J_q(z)\right)^2}}$, weighted at extrema with $p=2$.
	Squaring Eq.~(\ref{eq:bessel-geometric-series-right-side-example}), we have
	\begin{equation} \label{eq:bessel-geometric-series-right-side-squared-example}
		\frac{1}{\left(\frac{d}{dw}J_q(w)\right)^2}
		= \frac{\pi \left(R_m+iy\right)}{2} \frac{1}{\cosh^2 y}\left(1 + O\left(R_m^{-1}\right)\right) \, ,
	\end{equation}
	For $j\ge1$, the triangle inequality shows that Eq.~(\ref{eq:bessel-example-integral-2}) vanishes in the limit $m\to\infty$,
	\begin{align}
		\left|I_{(R,m)}^{(den,j)}\right|
		& = \left|\int_{\Gamma_m^{(R)}} dw \frac{1}{w^{j+1}} \frac{1}{\left(\frac{d}{dw}J_q(w)\right)^2}\right| \\
		& \le \frac{\pi}{2 R_m^j} \int_{-R_m}^{R_m} dy \frac{1}{\cosh^2 y} \left(1+O\left(R_m^{-1}\right)\right)
		= \pi R_m^{-j}\left(1+O\left(R_m^{-1}\right)\right) \, . \nonumber
	\end{align}
	However, in the case $j=0$ the integral does not vanish, instead giving
	\begin{equation}\begin{aligned}
			I_{(R,m)}^{(den,0)}
			& = \int_{\Gamma_m^{(R)}} dz \frac{1}{w} \frac{1}{\left(\frac{d}{dw}J_q(w)\right)^2} \\
			& = i \frac{\pi}{2} \int_{-R_m}^{R_m} dy \frac{1}{\cosh^2 y} \left(1+O\left(R_m^{-1}\right)\right) = i\pi + O\left(R_m^{-1}\right) \, .
	\end{aligned}\end{equation}
	This integral therefore yields a nonvanishing analytic remainder.
	
	The remaining integrals are evaluated in Appendix~\ref{sec:bessel-expansions-calculation} using the same techniques, namely, the asymptotic forms of Eq.~(\ref{eq:bessel-asymptotic}), the ML and triangle inequalities, and the parity argument of Prop.~\ref{prop:analytic-remainder-parity-argument}.
	
	\subsection{Comparison of Bessel function expansions} \label{sec:bessel-results}
	
	Following the example of Sec.~\ref{sec:cos-denom-not-extrema}, we derive expansions which use the extrema, zeros, or both extrema and zeros as nodes with both $p=1,2$, except for the case of weighting at both extrema and zeros with $p=2$, for which the expansion method presented here is not valid. The resulting expressions are given in Appendix~\ref{sec:bessel-expansions-calculation}; here we give here only the expansion weighted at extrema with $p=2$, which has the greatest accuracy and fastest convergence as shown in Figs.~\ref{fig:bessel-plot-comparison} and \ref{fig:bessel-plot-convergence}.
	
	\begin{figure}[hb!]
		\centering
		\includegraphics[width=\linewidth]{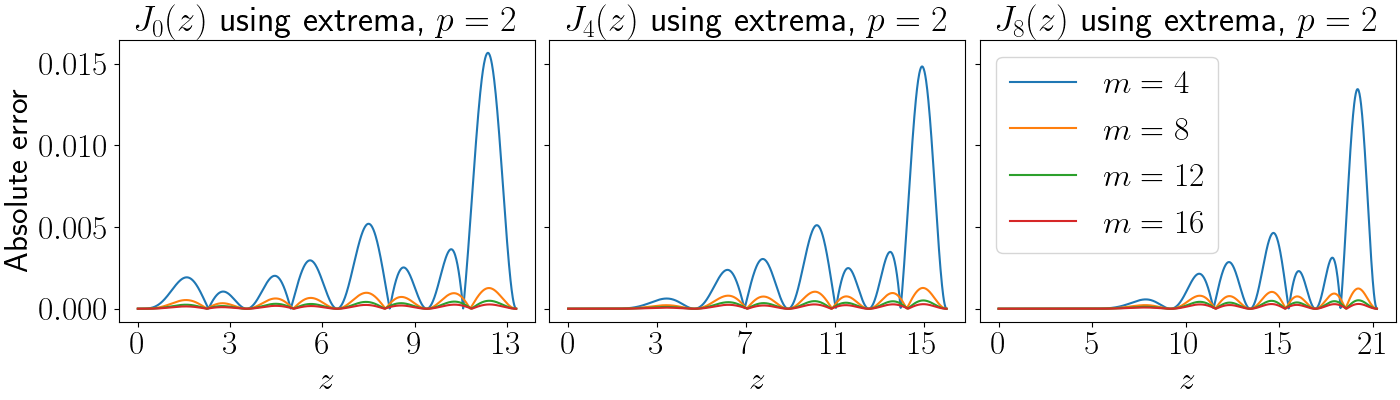}
		\caption{Here we plot the absolute error in the interpolation weighted at extrema with $p=2$ for varying order $q$ of $J_q(z)$. Since all nodes except $z_0=0$ shift away from the origin as $q$ increases, and the error profile of the approximation shifts with them, but its magnitude and shape remain essentially unchanged. For consistency we keep $q$ even and plot from $z_0$ to $z_4$ in each case.}
		\label{fig:bessel-order-comparison}
	\end{figure}
	
	The result for each order $q$ is
	\begin{equation}\begin{aligned} \label{eq:bessel-weighting-2-final-result-showcase}
		J_0(z)
		& = \frac{\frac{4}{z^2} + \sum_{k=-\infty, k\ne0}^\infty \left(\frac{a_{-2}^{(k,num)}}{\left(z-z_k\right)^2} + \frac{a_{-1}^{(k,num)}}{z-z_k}\right)}
		{1 + \frac{4}{z^2} + \sum_{k=-\infty,k\ne0}^\infty \left(\frac{a_{-2}^{(k,den)}}{\left(z-z_k\right)^2} + \frac{a_{-1}^{(k,den)}}{z-z_k}\right) } \, , \\
		J_1(z)
		& = \frac{\sum_{k=-\infty, k\ne0}^\infty \left(\frac{a_{-2}^{(k,num)}}{\left(z-z_k\right)^2} + \frac{a_{-1}^{(k,num)}}{z-z_k}\right)}
		{1 + \sum_{k=-\infty,k\ne0}^\infty \left(\frac{a_{-2}^{(k,den)}}{\left(z-z_k\right)^2} + \frac{a_{-1}^{(k,den)}}{z-z_k}\right) } \, , \\
		J_q(z)
		& = \frac{\sum_{n=1}^{q-2} \frac{a_{-n}^{(0,num)}}{z^n} + \sum_{k=-\infty, k\ne0}^\infty \left(\frac{a_{-2}^{(k,num)}}{\left(z-z_k\right)^2} + \frac{a_{-1}^{(k,num)}}{z-z_k}\right)}
		{1 + \sum_{n=1}^{2(q-1)} \frac{a_{-n}^{(0,den)}}{z^n} + \sum_{k=-\infty,k\ne0}^\infty \left(\frac{a_{-2}^{(k,den)}}{\left(z-z_k\right)^2} + \frac{a_{-1}^{(k,den)}}{z-z_k}\right) } \, .
	\end{aligned}\end{equation}
	The last case is for $q \ge 2$. The Laurent principal part coefficients $a_{-n}^{(k,num)}, a_{-n}^{(k,den)}$ for $k\ne0$ are given by
	\begin{equation}\begin{aligned} \label{eq:bessel-laurent-p2-example}
			a_{-2}^{(k,num)}
			& = \frac{J_q(z_k)}{(J_q^{(2)}(z_k))^2} \, , \quad
			a_{-1}^{(k,num)}
			= \frac{J_q^{(1)}(z_k)}{(J_q^{(2)}(z_k))^2} - \frac{(J_q(z_k))(J_q^{(3)}(z_k))}{(J_q^{(2)}(z_k))^3} \, , \\
			a_{-2}^{(k,den)}
			& = \frac{1}{(J_q^{(2)}(z_k))^2} \, , \quad
			a_{-1}^{(k,den)}
			= - \frac{J_q^{(3)}(z_k)}{(J_q^{(2)}(z_k))^3} \, .
	\end{aligned}\end{equation}
	For $k=0$ and $q\ge2$, the numerator Laurent principle part coefficients $a_{-n}^{(0,num)}$ are given by solving the matrix equation Eq.~(\ref{eq:laurent-coeff-matrix-equation-second-order-denom}) using the coefficients in Eq.~(\ref{eq:laurent-coeff-matrix-entries-second-order-denom}) with $j_0=q$ and $c_{q+2j} = \frac{(-1)^j}{2^{q+2j}} \frac{1}{j!(q+j)!}$ for $j\ge0$, while the denominator Laurent principal part coefficients $a_{-n}^{(0,den)}$ are given by solving the same matrix equation with $j_0=0$ and $c_j = \delta_{j,0}$. In both cases, $j_{*0}=j_{*1}=q-1$ and $b_{q-1+2j}^{(0)} = b_{q-1+2j}^{(1)} = \frac{(-1)^j}{2^{q+2j}} \frac{q+2j}{j!(q+j)!}$ for $j\ge0$. All $b$ and $c$ coefficients not covered by these indices vanish.
	
	The other Bessel function expansions besides that given in Eq.~(\ref{eq:bessel-weighting-2-final-result-showcase}) have similar forms; now let us compare them in greater detail. The accuracy of each expansion is essentially independent of the magnitude of $q$, as shown in Fig~\ref{fig:bessel-order-comparison}, but does depend on the parity of $q$ due to the vanishing of some analytic remainder integrals through Prop.~\ref{prop:analytic-remainder-parity-argument}. Hence, it is sufficient to examine only $q=0,1$.
	Fig.~\ref{fig:bessel-plot-comparison} shows the absolute error for these cases over an interval near the origin, and Fig.~\ref{fig:bessel-plot-convergence} shows how the total error over the same interval evolves with the number of nodes.
	
	\begin{figure}[h!]
		\centering
		\begin{subfigure}{\textwidth}
			\centering
			\includegraphics[width=\linewidth]{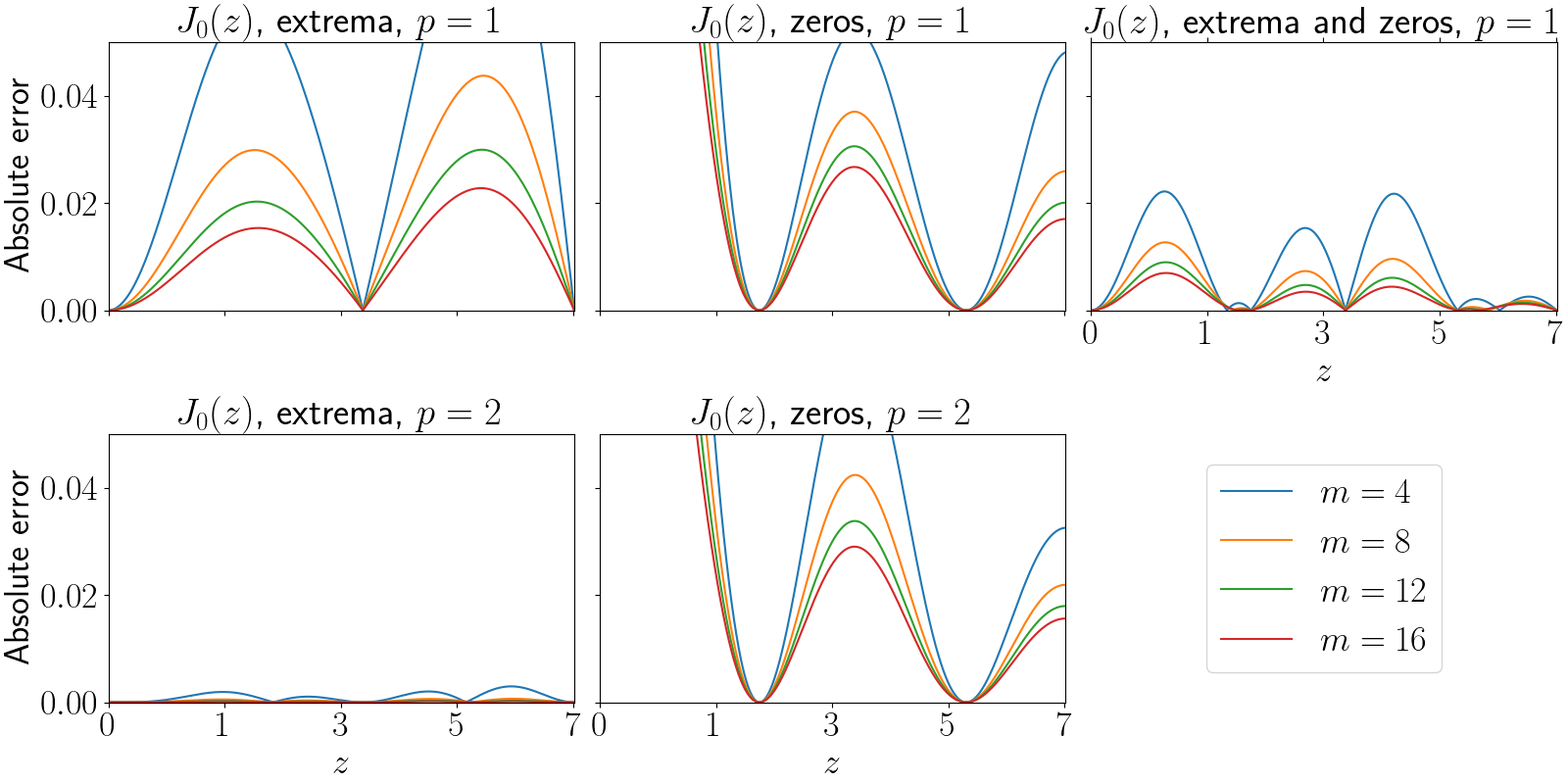}
		\end{subfigure}%
		\vspace{0.025\linewidth}
		\begin{subfigure}{\textwidth}
			\centering
			\includegraphics[width=\linewidth]{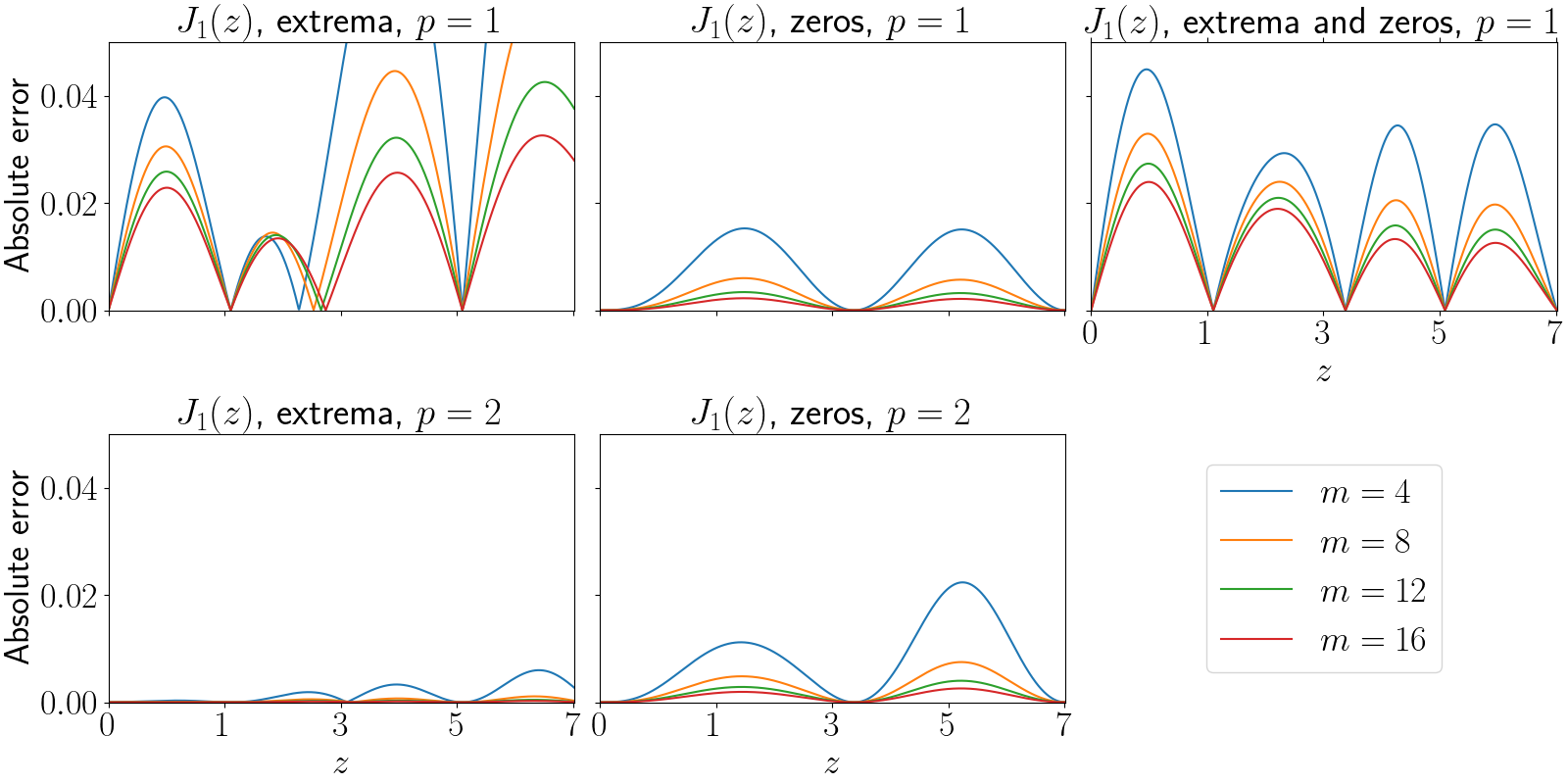}
		\end{subfigure}
		\caption{Here we compare the absolute error of approximate interpolations of $J_0(z)$ and $J_1(z)$ for different choices of nodes. These are indexed by $m$ such that the interpolation includes all nodes between $-z_m$ and $z_m$.}
		\label{fig:bessel-plot-comparison}
	\end{figure}
	
	\begin{figure}[h!]
		\centering
		\includegraphics[width=\linewidth]{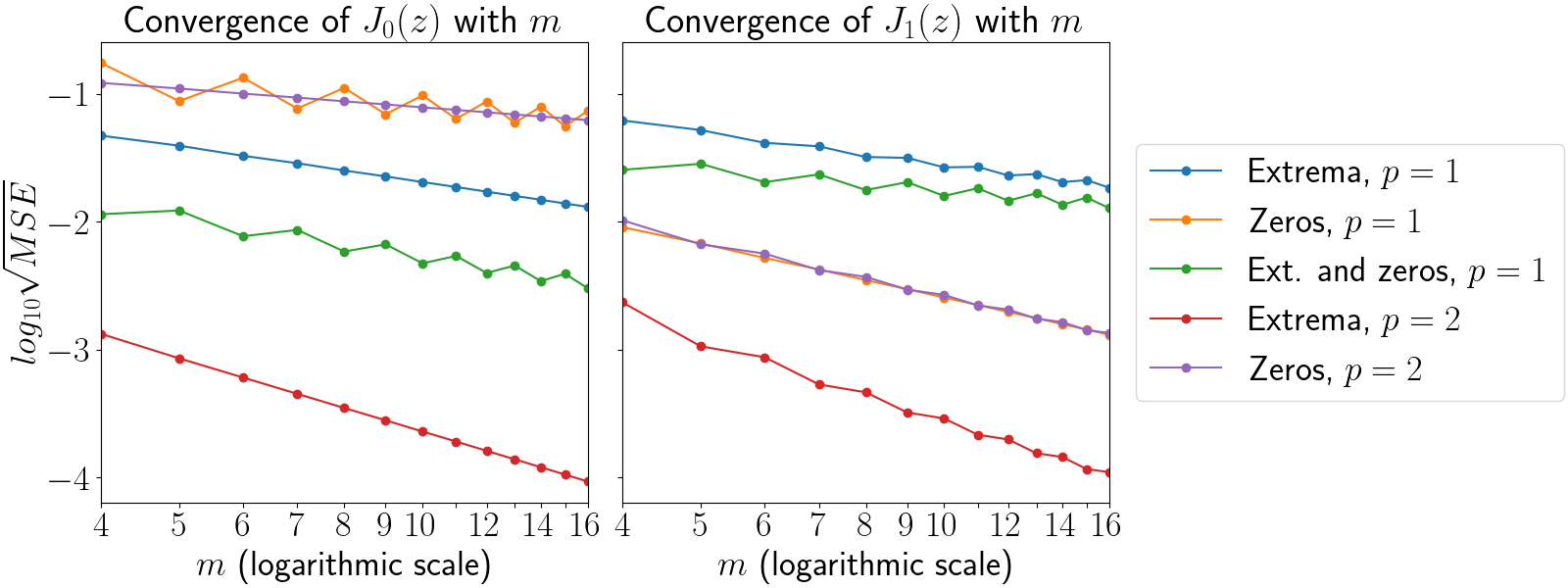}
		\caption{Here we compare the convergence of approximations of $J_0(z)$ and $J_1(z)$ with the number of nodes included in the approximation, where each includes all nodes from $-z_m$ to $z_m$. The mean square error is calculated over the same interval shown in Fig.~\ref{fig:bessel-plot-comparison}.}
		\label{fig:bessel-plot-convergence}
	\end{figure}
	
	We also give fits for the convergence exponent (i.e., $\alpha$ in $\sqrt{MSE} \propto R_m^{-\alpha}$ as plotted in Fig.~\ref{fig:bessel-plot-convergence}) in Table~\ref{tbl:bessel-scaling-exponents}. For comparison, we also give the estimates from the analytic remainder integrals, which by Prop.~\ref{prop:F-error-convergence-rate} set the convergence rate. The precision of these estimates depends on the sharpness of our bounds on the analytic remainder integrals.
	The simple estimates we have used are generally close to the fitted result, but since the bounds used in our previous calculations were relatively loose, the actual convergence rate significantly exceeds the prediction in a few cases.
	
	\begin{table}[b!]
		\centering
		\begin{tabular}{c || c | c || c | c}
			\multicolumn{5}{c}{Error scaling exponents on $R_m$} \\
			\hline\hline
			& \multicolumn{2}{c||}{Fit} & \multicolumn{2}{c}{Estimate} \\
			\hline
			& Even & Odd & Even & Odd \\
			\hline\hline
			Extrema, $p=1$ & 0.93 & 0.85 & 1 & 0.5 \\
			\hline
			Zeros, $p=1$ & 0.56 & 1.40 & 0.5 & 1 \\
			\hline
			Extrema and zeros, $p=1$ & 0.98 & 0.51 & 1 & 0.5 \\
			\hline
			Extrema, $p=2$ & 1.91 & 2.13 & 1.5 & 2 \\
			\hline
			Zeros, $p=2$ & 0.49 & 1.44 & 0.5 & 1.5
		\end{tabular}
		\caption{Here we compare the slopes of linear fits of Fig.~\ref{fig:bessel-plot-convergence} to the exponents $\alpha$ on $R_m^{-\alpha}$ from the analytic remainder integrals.
		We have also sharpened some bounds from the simpler forms presented earlier; for example, the bound in Eq.~(\ref{eq:bessel-remainder-2-num-right}) can be sharpened using antisymmetry in $y$.}
		\label{tbl:bessel-scaling-exponents}
	\end{table}
	
	Finally, in Figure~\ref{fig:bessel-expansion-comparison} we compare the approximate barycentric expansion weighted at extrema with $p=2$ against the Taylor expansion with the same number of terms and the asymptotic expansion.
	The barycentric expansion has the advantage of neither diverging as $|z|$ grows large nor as $|z|$ becomes small; in fact, outside the region delineated by the nodes, the barycentric expression tends to the correct large-$|z|$ limit of $0$ due to the analytic remainder in the denominator.
	Whereas the Taylor expansion prioritizes accuracy for small $|z|$ and the asymptotic expansion prioritizes accuracy for large $|z|$, the approximate barycentric expansion has relatively consistent accuracy across the interpolation region, trading specialization for increased generality.
	This is especially consequential for higher orders of Bessel functions, for which the asymptotic expansion is inaccurate for larger regions around $z=0$. 
	
	\begin{figure}[t!]
		\centering
		\includegraphics[width=\linewidth]{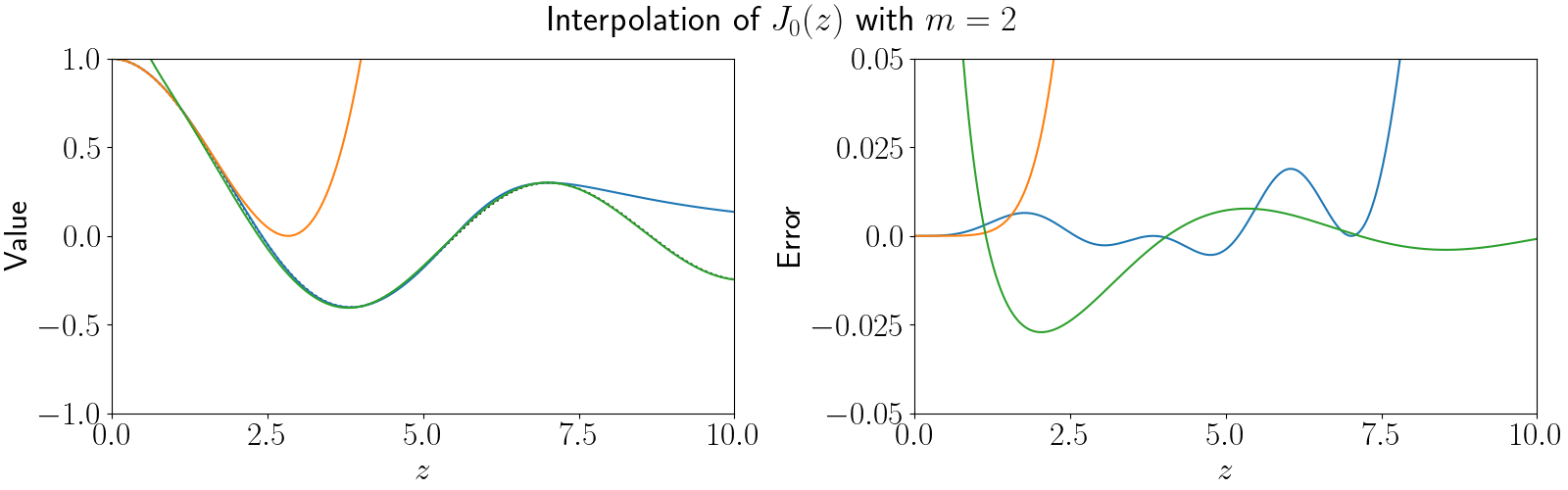}
		\includegraphics[width=\linewidth]{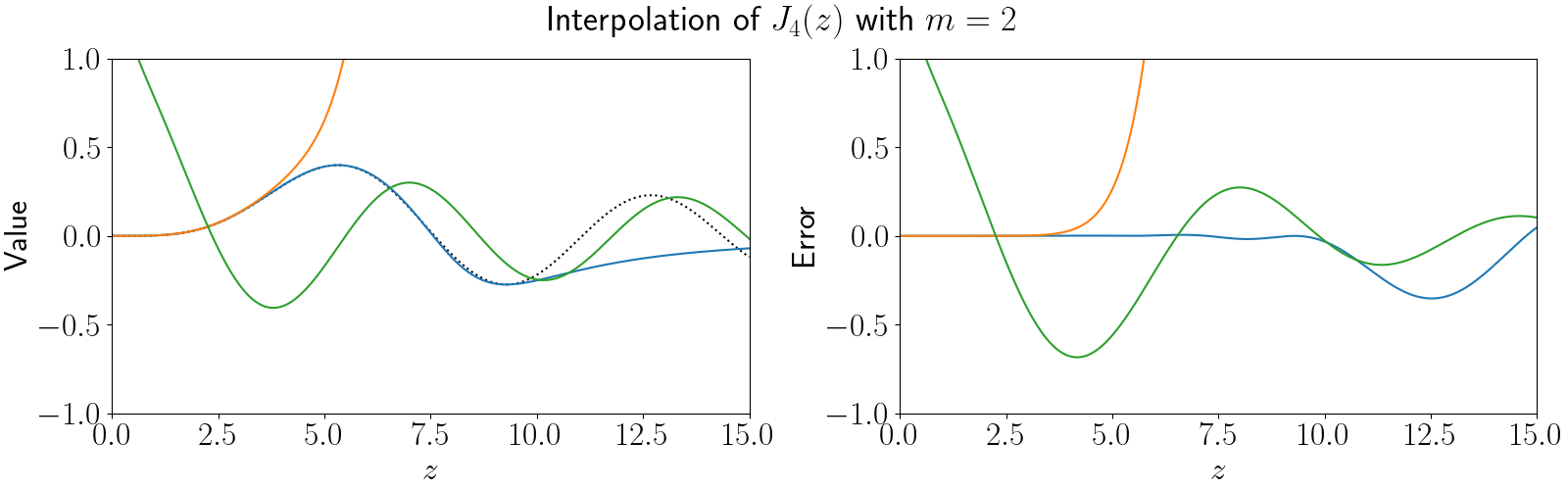}
		\includegraphics[width=\linewidth]{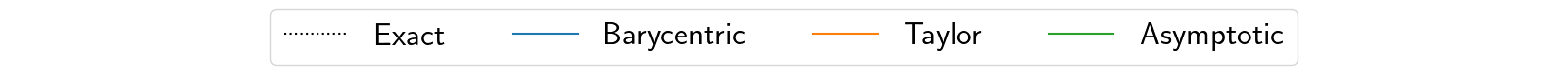}
		\caption{Here we compare our barycentric approximations for $J_0(z)$ and $J_4(z)$ with the corresponding Taylor series and asympltotic approximations. The left plot compares the approximations directly and the right plot compares their error. For the barycentric expansion we include as nodes all extrema from $-z_m$ to $z_m$, and to match the number of independent terms, for the Taylor series we truncate after the third term.}
		\label{fig:bessel-expansion-comparison}
	\end{figure}
	
	\section{Conclusion}
	
	We have demonstrated a method to produce function expansions in the form of barycentric rational interpolations using standard complex analysis techniques. These expansions allow functions to be approximated with good accuracy in any region from only a few explicit evaluations at the surrounding nodes.
	We have demonstrated the method with cosine and the Bessel functions of the first kind and shown that the error and convergence of these approximations is readily quantifiable.
	In principle, any nodes many be chosen to construct these barycentric expansions. Our main interest in this paper has been the analytic properties of the expansions and so we have limited our focus to natural points for each function such as extrema and zeros.
	
	\section*{Acknowledgments}
	
	I would like to thank Matthew S. Mizuhara for helpful comments on drafts of this paper.
	
	\printbibliography
	
	\newpage
	
	\appendix
	
	\section{Barycentric rational expansions for cosine} \label{sec:cosine-different-anchors}
	
	Here we give the explicit expressions for the cosine expansions compared in Sec.~\ref{sec:cos-denom-not-extrema}, except those which were presented earlier in Secs.~\ref{sec:first-cos-expansion} and \ref{sec:cos-denom-power-weights}.
	
	The expansions of $\cos z$ weighted at zeros are
	\begin{align}
		\cos z
		& = \frac{\sfrac{\cos z}{\cos z}}{\sfrac{1}{\cos z}}
		= \frac{1}{\sum_{k=\infty}^\infty \frac{(-1)^k}{z-\pi\left(k-\frac{1}{2}\right)}} \, , \label{eq:cos-zeros-anchors-p=1} \\
		\cos z
		& = \frac{\sfrac{\cos z}{\cos^2 z}}{\sfrac{1}{\cos^2 z}}
		= \frac{\sum_{k=\infty}^\infty \frac{(-1)^k}{z-\pi\left(k-\frac{1}{2}\right)}}{\sum_{k=\infty}^\infty \frac{1}{\left(z-\pi\left(k-\frac{1}{2}\right)\right)^2}} \, . \label{eq:cos-zeros-anchors-p=2}
	\end{align}
	The expansions weighted at both extrema and zeros are
	\begin{align}
		\cos z
		& = \frac{\sfrac{\cos z}{\sin 2z}}{\sfrac{1}{\sin 2z}}
		= \frac{ \sum_{k=-\infty}^\infty \frac{(-1)^k}{z - \pi k} }
		{ \sum_{k=-\infty}^\infty \frac{(-1)^k}{z - \frac{\pi}{2} k} } \, , \label{eq:cos-extrema-and-zeros-anchors-p=1} \\
		\cos z
		& = \frac{\sfrac{\cos z}{\sin^2 2z}}{\sfrac{1}{\sin^2 2z}}
		= \frac{ \sum_{k=-\infty}^\infty \left( \frac{(-1)^k}{\left(z - \pi k\right)^2} - \frac{(-1)^k}{z - \pi \left(k+\frac{1}{2}\right)} \right) }{ \sum_{k=-\infty}^\infty \frac{1}{\left(z - \frac{\pi}{2} k\right)^2} } \, .
	\end{align}
	The expansions weighted at midpoints between extrema and zeros are
	\begin{align}
		\cos z
		& = \frac{\sfrac{\cos z}{\sin \left(2\left(z-\frac{\pi}{4}\right)\right)}}{\sfrac{1}{\sin \left(2\left(z-\frac{\pi}{4}\right)\right)}}
		= \frac{\sum_{k=-\infty}^\infty
			\frac{1}{\sqrt{2}}\frac{(-1)^{\left\lfloor\sfrac{k}{2}\right\rfloor}}{z - \frac{\pi}{2}\left(k+\frac{1}{2}\right)}
		}
		{\sum_{k=-\infty}^\infty
			\frac{1}{z - \frac{\pi}{2}\left(k+\frac{1}{2}\right)}
		} \, , \\
		\cos z
		& = \frac{\sfrac{\cos z}{\sin^2 \left(2\left(z-\frac{\pi}{4}\right)\right)}}{\sfrac{1}{\sin^2 \left(2\left(z-\frac{\pi}{4}\right)\right)}} \\
		& = \frac{ \sum_{k=-\infty}^\infty \left(
			\frac{1}{\sqrt{2}}\frac{ (-1)^{\left\lceil\sfrac{k}{2}\right\rceil} }{\left(z - \frac{\pi}{2}\left(k+\frac{1}{2}\right)\right)^2}
			-
			\frac{1}{\sqrt{2}}\frac{ (-1)^{\left\lfloor\sfrac{k}{2}\right\rfloor} }{z - \frac{\pi}{2}\left(k+\frac{1}{2}\right)}
			\right)}{\sum_{k=-\infty}^\infty
			\frac{1}{\left(z - \frac{\pi}{2}\left(k+\frac{1}{2}\right)\right)^2}
		} \, . \nonumber
	\end{align}
	
	\section{Barycentric rational expansions for the Bessel functions} \label{sec:bessel-expansions-calculation}
	
	Here we give the full details necessary to calculate the Bessel function expansions in Sec.~\ref{sec:bessel-barycentric}. We reproduce some demonstrative calculations from that section here for completeness.
	
	\subsection{Bessel functions weighted at extrema with $p=1$} \label{sec:bessel-p-1}
	
	First we consider weighting at the extrema with $p=1$, writing
	\begin{equation} \label{eq:bessel-weighting-1}
		J_q (z) = \frac{ \sfrac{J_q (z)}{\frac{d}{dz}J_q (z)} }{ \sfrac{1}{\frac{d}{dz}J_q (z)} } \, .
	\end{equation}
	As we will show in the remainder of this section, the result is
	\begin{equation} \label{eq:bessel-weighting-1-final-result}
		J_q(z) = \frac{A^{(0,num)}(z) + \sum_{k=-\infty,k\ne0}^\infty \frac{a_{-1}^{(k,num)}}{z-z_k}}{ A^{(0,den)}(z) + \sum_{k=-\infty,k\ne0}^\infty \frac{a_{-1}^{(k,den)}}{z-z_k} }
	\end{equation} 
	where $z_k$ are the zeros of the Bessel function aside from $z_0=0$, the Laurent principal part coefficients for $|k|\ge1$ are given by Eq.~(\ref{eq:bessel-laurent-p1}), and the Laurent principal part for the pole at $z_0=0$ is
	\begin{equation}
		\begin{matrix}
			A^{(0,num)}(z) = \left\{\begin{matrix}
				\vphantom{-\frac{2}{z}} \\
				\vphantom{0}
			\end{matrix}\right. \vspace{1.5mm} \\
			A^{(0,den)}(z) = \left\{\begin{matrix}
				\vphantom{-\frac{2}{z}} \\
				\vphantom{0} \\
				\vphantom{\sum_{n=1}^{q-1} \frac{a_{-n}^{(0,den)}}{z^n}}
			\end{matrix}\right.
		\end{matrix}
		\begin{matrix*}[l]
			-\frac{2}{z} \, , & q = 0 \\
			0 \, , & q \ge 1 \vspace{1.5mm} \\
			-\frac{2}{z} \, , & q = 0 \\
			0 \, , & q = 1 \\
			\sum_{n=1}^{q-1} \frac{a_{-n}^{(0,den)}}{z^n} \, , & q \ge 2
		\end{matrix*}
	\end{equation}
	where the coefficients $a_{-n}^{(0,den)}$ are calculated by solving the matrix equation Eq.~(\ref{eq:laurent-coeff-mat-eq-simple-appendix}).
	
	\subsubsection{Calculating the Laurent series principal parts} \label{sec:bessel-laurent-p1}
	
	The numerator of Eq.~(\ref{eq:bessel-weighting-1}) has no pole at $z_0=0$ and simple poles at all other $z_k$, while the denominator has a pole of order $q-1$ at $z_0$ for $q\ge2$ and simple poles at all other $z_k$.
	As described in Sec.~\ref{sec:bessel-laurent}, to calculate the Laurent principal part for the denominator's high-order pole at $z_0$, we need to solve a matrix equation with the form
	\begin{equation} \label{eq:laurent-coeff-mat-eq-simple-appendix}
		c = B a
	\end{equation}
	where the vector and matrix elements $[a]_j$, $[B]_{j,j'}$, $[c]_j$ are given by
	\begin{align}
		\begin{matrix*}
			\left[a\right]_j & = & a_{-(j_* - j_0) + j} \, , \\
			\left[B\right]_{j,j'} & = & b_{j_* + j - j'} \Theta\left(j-j'\right) \, , \\
			\left[c\right]_j & = & c_{j_0 + j} \, ,
		\end{matrix*}
		\left.\vphantom{\begin{matrix*}
				\left[a\right]_j & = & a_{-(j_* - j_0) + j} \, , \\
				\left[B\right]_{j,j'} & = & b_{j_* + j - j'} \Theta\left(j-j'\right) \, , \\
				\left[c\right]_j & = & c_{j_0 + j} \, ,
			\end{matrix*}}\right\}
		\quad 0 \le j,j' < j_* - j_0 - 1
	\end{align}
	To apply Eq.~(\ref{eq:laurent-coeff-mat-eq-simple-appendix}) to the denominator for $q\ge2$, we take $j_0=0$, $j_*=q-1$, $c_j=\delta_{j,0}$, and from the Bessel function power series, $b_{q-1+2j} = \frac{(-1)^j}{2^{q+2j}} \frac{q+2j}{j!(q+j)!}$ for $j\ge0$ with all other $b$ terms vanishing.
	
	The poles at $z_k$ for $k\ne0$ (i.e., $z_k\ne0$) are all simple, in which case the method of Eq.~(\ref{eq:laurent-coeff-mat-eq-simple-appendix}) reduces to a single equation that is trivial to solve analytically. The resulting Laurent coefficients for the numerator $\sfrac{J_q (z)}{\frac{d}{dz}J_q (z)}$ and denominator $\sfrac{1}{\frac{d}{dz}J_q (z)}$ for $k\ne0$ are
	\begin{equation}\begin{aligned} \label{eq:bessel-laurent-p1}
			a_{-1}^{(k,num)}
			& = \left.\frac{J_q(z)}{\frac{d^2}{dz^2}J_q(z)}\right|_{z=z_k}
			= \frac{4J_q(z_k)}{J_{q-2}(z_k)-2J_q(z_k)+J_{q+2}(z_k)} \, , \\
			a_{-1}^{(k,den)}
			& = \left.\frac{1}{\frac{d^2}{dz^2}J_q(z)}\right|_{z=z_k}
			= \frac{4}{J_{q-2}(z_k)-2J_q(z_k)+J_{q+2}(z_k)} \, .
	\end{aligned}\end{equation}
	
	\subsubsection{Calculating the analytic remainders} \label{sec:bessel-analytic-remainder-p1}
	
	As noted in Sec.~\ref{sec:bessel-analytic-remainders}, by the parity of the Bessel functions, we only need to calculate for only the integrals along the right and top sides, $S=R,T$,
	\begin{equation} \label{eq:bessel-p-1-integral-prototypes}
		I_{(S,m)}^{(num,j)} = \int_{\Gamma_m^{(S)}} dw \frac{1}{w^{j+1}} \frac{J_q(w)}{\frac{d}{dw}J_q(w)} \, , \quad
		I_{(S,m)}^{(den,j)} = \int_{\Gamma_m^{(S)}} dw \frac{1}{w^{j+1}} \frac{1}{\frac{d}{dw}J_q(w)} \, .
	\end{equation}
	
	We will begin with the right side, $\Gamma_m^{(R)}$. As derived in Eqs.~(\ref{eq:bessel-geometric-series-right-side-example}) and (\ref{eq:bessel-geo-series-right-num-example}),
	\begin{align}
		\left.\frac{1}{\frac{d}{dw}J_q(w)}\right|_{R_m+iy}
		& = -\sqrt{\frac{\pi \left(R_m+iy\right)}{2}} \frac{(-1)^m}{\cosh y}\left(1 + O\left(R_m^{-1}\right)\right) \, , \label{eq:bessel-geometric-series-right-side} \\
		\left.\frac{J_q(w)}{\frac{d}{dw}J_q(w)}\right|_{R_m+iy}
		& = i \tanh y + O\left(R_m^{-1}\right) \, . \label{eq:bessel-geo-series-right-num}
	\end{align}

	For $I_{(R,m)}^{(num,j)}$, use the ML inequality together with $\left|\tanh y\right| \le 1$ to see that
	\begin{equation} \label{eq:bessel-remainder-1-num-right}
		\left|I_{(R,m)}^{(num,j)}\right|
		= \left|\int_{\Gamma_m^{(R)}} dw \frac{1}{w^{j+1}} \frac{J_q(w)}{\frac{d}{dw}J_q(w)}\right|
		\le \frac{2}{R_m^j} + O\left(R_m^{-j-1}\right) \, , 
	\end{equation}
	The entire term vanishes for $j\ge1$ and the error term vanishes for all $j$. The $j=0$ case must be treated individually, but since here the integrand $\sfrac{J_q(w)}{\frac{d}{dw}J_q(w)}$ is odd while $j$ is even, by the parity argument of Prop.~\ref{prop:analytic-remainder-parity-argument}, $I_{(R,m)}^{(num,j)}$ cancels with $I_{(L,m)}^{(num,j)}$ so that this integral does not contribute to the analytic remainder.
	
	Next, using the triangle inequality, $I_{(R,m)}^{(den,j)}$ tends to $0$ as $m\to\infty$ for all $j$.
	\begin{equation}\begin{aligned} \label{eq:bessel-remainder-1-den-right}
		\left|I_{(R,m)}^{(den,j)}\right|
		& = \left|\int_{\Gamma_m^{(R)}} dw \frac{1}{w^{j+1}} \frac{1}{\frac{d}{dw}J_q(w)}\right| \\
		& < \sqrt{\frac{\pi}{2}} \frac{1}{R_m^{j+\sfrac{1}{2}}} \int_{-R_m}^{R_m} dy \frac{1}{\cosh y} + O\left(R_m^{-j-\sfrac{1}{2}}\right) \\
		& < \sqrt{\frac{\pi}{2}} \frac{1}{R_m^{j+\sfrac{1}{2}}} \int_{-R_m}^{R_m} dy \, e^{-|y|} + O\left(R_m^{-j-\sfrac{1}{2}}\right) \\
		& = \frac{\sqrt{2\pi}}{R_m^{j+\sfrac{1}{2}}} \left(1-e^{-R_m}\right) + O\left(R_m^{-j-\sfrac{1}{2}}\right) \, .
	\end{aligned}\end{equation}
	This integral vanishes in the limit $m\to\infty$ for any $j$.
	
	For the top side of the contour, we will instead observe that
	\begin{equation}\begin{aligned} \label{eq:bessel-top-side-simplification}
		\sin\left(x+iR_m-\frac{\pi}{2}\left(q+\frac{1}{2}\right)\right)
		& = e^{R_m} O\left(1\right) \, , \\
		\cos\left(x+iR_m-\frac{\pi}{2}\left(q+\frac{1}{2}\right)\right)
		& = e^{R_m}  O\left(1\right) \, .
	\end{aligned}\end{equation}
	Again using these in the asymptotic forms of Eq.~(\ref{eq:bessel-asymptotic}), we find
	\begin{align}
		\left.\frac{1}{\frac{d}{dw}J_q(w)}\right|_{x+iR_m} \!\!\!\!
		& = e^{-R_m} O\left(R_m^{\sfrac{1}{2}}\right) \, , \label{eq:bessel-geo-series-top-den} \\
		\left.\frac{J_q(w)}{\frac{d}{dw}J_q(w)}\right|_{x+iR_m} \!\!\!\!
		& = O\left(1\right) \, . \label{eq:bessel-geo-series-top-num}
	\end{align}
	
	Both of these are independent of $x$, so using $\left|R_m+iy\right|^{-j} \le R_m^{-j}$ with the triangle inequality, the integrals $I_{(T,m)}^{(num,j)}$ and $I_{(T,m)}^{(num,j)}$ are simple to bound.
	\begin{align}
		\left|I_{(T,m)}^{(num,j)}\right|
		& = \left|\int_{\Gamma_m^{(T)}} dw \frac{1}{w^{j+1}} \frac{J_q(w)}{\frac{d}{dw}J_q(w)}\right|
		= O\left(R_m^{-j}\right) \, , \label{eq:bessel-remainder-1-num-top} \\
		\left|I_{(T,m)}^{(den,j)}\right|
		& = \left|\int_{\Gamma_m^{(T)}} dz \frac{1}{w^{j+1}} \frac{1}{\frac{d}{dw}J_q(zw)}\right|
		= e^{-R_m} O\left(R_m^{-j+\sfrac{1}{2}}\right) \, . \label{eq:bessel-remainder-1-den-top}
	\end{align}
	$I_{(T,m)}^{(num,j)}$ vanishes as $m\to\infty$ for any $j$, and $I_{(T,m)}^{(num,j)}$ vanishes as $m\to\infty$ for $j\ge1$. For the case of $j=0$, we again note that its integrand $\sfrac{J_q(w)}{\frac{d}{dw}J_q(w)}$ is odd while $j$ is even, so that by the parity argument of Prop.~\ref{prop:analytic-remainder-parity-argument}, $I_{(T,m)}^{(num,j)}$ cancels with $I_{(B,m)}^{(num,j)}$ and this integral never contributes to the analytic remainder.
	
	Hence all analytic remainder integrals either vanish in the limit $m\to\infty$ or cancel with one another. Thus, as the integration contour $\Omega_m$ grows to cover the entire complex plane and include all the poles of the numerator and denominator, the analytic remainders of both the numerator and denominator vanish, leaving only contributions from the Laurent series principal parts of each of their poles in Eq.~(\ref{eq:bessel-weighting-1-final-result}).
	
	\subsection{Bessel functions weighted at extrema with $p=2$} \label{sec:bessel-p-2}
	
	Now we consider the case $p=2$, writing
	\begin{equation} \label{eq:bessel-weighting-2}
		J_q (z) = \frac{ \sfrac{J_q (z)}{\left(\frac{d}{dz}J_q (z)\right)^2} }{ \sfrac{1}{\left(\frac{d}{dz}J_q (z)\right)^2} } \, .
	\end{equation}
	The resulting expansion is
	\begin{equation} \label{eq:bessel-weighting-2-final-result}
		J_q(z) = \frac{A^{(0,num)}(z) + \sum_{k=-\infty, k\ne0}^\infty \left(\frac{a_{-2}^{(k,num)}}{\left(z-z_k\right)^2} + \frac{a_{-1}^{(k,num)}}{z-z_k}\right)}
		{1 + A^{(0,den)}(z) + \sum_{k=-\infty,k\ne0}^\infty \left(\frac{a_{-2}^{(k,den)}}{\left(z-z_k\right)^2} + \frac{a_{-1}^{(k,den)}}{z-z_k}\right) }
	\end{equation}
	where $z_k$ are the zeros of the Bessel function aside from $z_0=0$, the Laurent principal part coefficients for $|k|\ge1$ are given by Eq.~(\ref{eq:bessel-laurent-p2}), and the Laurent principal part for $k=0$ is
	\begin{equation}
		\begin{matrix}
			A^{(0,num)}(z) = \left\{\begin{matrix}
				\vphantom{\frac{4}{z^2}} \\
				\vphantom{0} \\
				\vphantom{\sum_{n=1}^{q-2} \frac{a_{-n}^{(0,num)}}{z^n}}
			\end{matrix}\right. \vspace{1.5mm} \\
			A^{(0,den)}(z) = \left\{\begin{matrix}
				\vphantom{\frac{4}{z^2}} \\
				\vphantom{0} \\
				\vphantom{\sum_{n=1}^{2(q-1)} \frac{a_{-n}^{(0,den)}}{z^n}}
			\end{matrix}\right.
		\end{matrix}
		\begin{matrix*}[l]
			\frac{4}{z^2} \, , & q = 0 \\
			0 \, , & q = 1 \\
			\sum_{n=1}^{q-2} \frac{a_{-n}^{(0,num)}}{z^n} \, , & q \ge 2 \vspace{1.5mm} \\
			\frac{4}{z^2} \, , & q = 0 \\
			0 \, , & q = 1 \\
			\sum_{n=1}^{2(q-1)} \frac{a_{-n}^{(0,den)}}{z^n} \, , & q \ge 2
		\end{matrix*}
	\end{equation}
	where the coefficients $a_{-n}^{(0,num)}$ and $a_{-n}^{(0,den)}$ are calculated by solving the matrix equation Eq.~(\ref{eq:laurent-coeff-matrix-equation-p2}).
	
	\subsubsection{Calculating the Laurent series principal parts} \label{sec:bessel-laurent-p2}
	
	Using the approach of Eq.~(\ref{eq:laurent-coeff-matrix-equation-second-order-denom}) in Sec.~\ref{sec:bessel-laurent} with $B_0 = B_1 = B$, we must solve
	\begin{equation} \label{eq:laurent-coeff-matrix-equation-p2}
		c = B^2 a
	\end{equation}
	with
	\begin{align}
		\begin{matrix*}
			\left[a\right]_j & = & a_{-(2j_* - j_0) + j} \, , \\
			\left[B\right]_{j,j'} & = & b_{j_* + j - j'} \Theta\left(j-j'\right) \, , \\
			\left[c\right]_j & = & c_{j_0 + j} \, .
		\end{matrix*}
		\left.\vphantom{\begin{matrix*}
				\left[a\right]_j & = & a_{-(2j_* - j_0) + j} \, , \\
				\left[B\right]_{j,j'} & = & b_{j_* + j - j'} \Theta\left(j-j'\right) \, , \\
				\left[c\right]_j & = & c_{j_0 + j} \, .
		\end{matrix*}}\right\}
		\quad 0 \le j,j' < 2 j_* - j_0 - 1
	\end{align}
	for both the numerator and denominator of Eq.~(\ref{eq:bessel-weighting-2}).
	For the numerator, we have $j_0=q$ and $c_{q+2j} = \frac{(-1)^j}{2^{q+2j}} \frac{1}{j!(q+j)!}$ for $j\ge0$.
	For the denominator, we have $j_0=0$ and $c_j = \delta_{j,0}$.
	In both cases, $j_*=q-1$ and $b_{q-1+2j} = \frac{(-1)^j}{2^{q+2j}} \frac{q+2j}{j!(q+j)!}$ for $j\ge0$.
	All coefficients not covered by the indices given here vanish.
	
	Applying the same method to find the Laurent principal part coefficients for the second-order poles at $z_k$ for $k\ne0$ and using the notation $\frac{d^n}{dz^n}f(z) = f^{(n)}(z)$ for brevity, we find
	\begin{equation}\begin{aligned} \label{eq:bessel-laurent-p2}
			a_{-2}^{(k,num)}
			& = \frac{J_q(z_k)}{(J_q^{(2)}(z_k))^2} \, , \quad
			a_{-1}^{(k,num)}
			= \frac{J_q^{(1)}(z_k)}{(J_q^{(2)}(z_k))^2} - \frac{(J_q(z_k))(J_q^{(3)}(z_k))}{(J_q^{(2)}(z_k))^3} \, , \\
			a_{-2}^{(k,den)}
			& = \frac{1}{(J_q^{(2)}(z_k))^2} \, , \quad
			a_{-1}^{(k,den)}
			= - \frac{J_q^{(3)}(z_k)}{(J_q^{(2)}(z_k))^3} \, .
	\end{aligned}\end{equation}
	If desired, these expressions can be expanded using the recurrence relation Eq.~(\ref{eq:bessel-recurrence}) as in Eq.~(\ref{eq:bessel-laurent-p1}), although the result is much more cumbersome.
	
	\subsubsection{Calculating the analytic remainders} \label{sec:bessel-analytic-remainder-p2}
	
	As in Sec.~\ref{sec:bessel-analytic-remainder-p1}, we will calculate the integrals for the right and top sides of the contour $\partial\Omega_m$, $S=R,T$, and the full contour follows due to the parity of the Bessel functions.
	\begin{equation}\begin{aligned}
		I_{(S,m)}^{(num,j)} & = \int_{\Gamma_m^{(S)}} dw \frac{1}{w^{j+1}} \frac{J_q(w)}{\left(\frac{d}{dw}J_q(w)\right)^2} \, , \\
		I_{(S,m)}^{(den,j)} & = \int_{\Gamma_m^{(S)}} dw \frac{1}{w^{j+1}} \frac{1}{\left(\frac{d}{dw}J_q(w)\right)^2} \, .
	\end{aligned}\end{equation}
	
	Squaring Eq.~(\ref{eq:bessel-geometric-series-right-side}) and multiplying Eqs.~(\ref{eq:bessel-geometric-series-right-side}) and (\ref{eq:bessel-geo-series-right-num}), we have
	\begin{align}
			\frac{1}{\left(\frac{d}{dw}J_q(w)\right)^2}
			& = \frac{\pi \left(R_m+iy\right)}{2} \frac{1}{\cosh^2 y}\left(1 + O\left(R_m^{-1}\right)\right) \, , \label{eq:bessel-geometric-series-right-side-squared} \\
			\frac{J_q(w)}{\left(\frac{d}{dw}J_q(w)\right)^2}
			& = i (-1)^{m+1} \sqrt{\frac{\pi\left(R_m+iy\right)}{2}} \frac{\sinh y}{\cosh^2 y} + O\left(R_m^{-\sfrac{1}{2}}\right)
	\end{align}
	
	By the triangle inequality,
	\begin{equation}\begin{aligned} \label{eq:bessel-remainder-2-num-right}
			\left|I_{(R,m)}^{(num,j)}\right|
			& = \left|\int_{\Gamma_m^{(R)}} dw \frac{1}{w^{j+1}} \frac{J_q(w)}{\left(\frac{d}{dw}J_q(w)\right)^2}\right| \\
			& < \int_{-R_m}^{R_m} dy \left( \sqrt{\frac{\pi}{2}} \frac{1}{R_m^{j+\sfrac{1}{2}}} \frac{\left|\sinh y\right|}{\cosh^2 y} + O\left(R_m^{-j-\sfrac{3}{2}}\right) \right) \\
			& = \frac{\sqrt{2\pi}}{R_m^{j+\sfrac{1}{2}}} \left(1 - \text{sech} R_m\right) + O\left(R_m^{-j-\sfrac{1}{2}}\right) \, ,
	\end{aligned}\end{equation}
	which vanishes as $m\to\infty$ for any $j$.
	
	As in Sec.~\ref{sec:bessel-analytic-remainders}, the triangle inequality shows that the denominator integrals vanish in the limit $m\to\infty$ for $j\ge1$,
	\begin{align} \label{eq:bessel-remainder-2-den-right}
		\left|I_{(R,m)}^{(den,j)}\right|
		& = \left|\int_{\Gamma_m^{(R)}} dw \frac{1}{w^{j+1}} \frac{1}{\left(\frac{d}{dw}J_q(w)\right)^2}\right| \\
		& \le \frac{\pi}{2 R_m^j} \int_{-R_m}^{R_m} dy \frac{1}{\cosh^2 y} \left(1+O\left(R_m^{-1}\right)\right)
		= \pi R_m^{-j}\left(1+O\left(R_m^{-1}\right)\right) \, . \nonumber
	\end{align}
	However, in the case $j=0$ the integral does not vanish, instead giving
	\begin{equation}\begin{aligned} \label{eq:bessel-remainder-2-den-right-j0}
		I_{(R,m)}^{(den,0)}
		& = \int_{\Gamma_m^{(R)}} dz \frac{1}{w} \frac{1}{\left(\frac{d}{dw}J_q(w)\right)^2} \\
		& = i \frac{\pi}{2} \int_{-R_m}^{R_m} dy \frac{1}{\cosh^2 y} \left(1+O\left(R_m^{-1}\right)\right) = i\pi + O\left(R_m^{-1}\right) \, .
	\end{aligned}\end{equation}
	
	For the top side fo the contour, we square Eq.~(\ref{eq:bessel-geo-series-top-den}) to obtain
	\begin{equation}
		\frac{1}{\left(\frac{d}{dw}J_q(w)\right)^2}
		 = e^{-2R_m} O\left(R_m\right) \, , \quad\quad \label{eq:bessel-geometric-series-top-side-squared}
		\frac{J_q(w)}{\left(\frac{d}{dw}J_q(w)\right)^2}
		 = e^{-R_m} O\left(R_m^{\sfrac{1}{2}}\right) \, .
	\end{equation}
	In the same way as in Eqs.~(\ref{eq:bessel-remainder-1-num-top}) and (\ref{eq:bessel-remainder-1-den-top}), the resulting integrals vanish in the limit $m\to\infty$ for any $j$ due to the exponential factors,
	\begin{align}
		\left|I_{(T,m)}^{(num,j)}\right|
		& = \left|\int_{\Gamma_m^{(T)}} dw \frac{1}{w^{j+1}} \frac{J_q(w)}{\left(\frac{d}{dw}J_q(w)\right)^2}\right|
		= e^{-R_m} O\left(R_m^{-j+\sfrac{1}{2}}\right) \, , \\
		\left|I_{(T,m)}^{(den,j)}\right|
		& = \left|\int_{\Gamma_m^{(T)}} dw \frac{1}{w^{j+1}} \frac{1}{\left(\frac{d}{dw}J_q(w)\right)^2}\right|
		= e^{-2R_m} O\left(R_m^{-j+1}\right) \, .
	\end{align}
	
	In the limit of larger $m$, the only nonvanishing analytic remainder integrals are the $j=0$ top and bottom side integrals for the denominator, which by parity are equal as shown in Prop.~\ref{prop:analytic-remainder-parity-argument},
	\begin{align}
		\oint_{\partial\Omega_m} dw \frac{1}{w} \frac{1}{\left(\frac{d}{dw}J_q(w)\right)^2}
		& = I_{(R,m)}^{(den,j)} + I_{(T,m)}^{(den,j)} + I_{(L,m)}^{(den,j)} + I_{(B,m)}^{(den,j)} \\
		& = 0 + \pi i + 0 + \pi i = 2 \pi i
	\end{align}
	Hence, referring to Eq.~(\ref{eq:g-integral-power-series}), the analytic remainder for the denominator is just
	\begin{equation}\begin{aligned} \label{eq:bessel-evaluated-analytic-remainder-p2}
		g^{(den)}(z) & = \lim_{m \to \infty} \sum_{j=0}^\infty w^j \left( \frac{1}{2 \pi i} \oint_{\partial\Omega_m} dw \frac{1}{w^{j+1}}\frac{1}{\left(\frac{d}{dw}J_q(w)\right)^2} \right) \\
		& = \sum_{j=0}^\infty w^j \left( \frac{1}{2 \pi i} 2 \pi i \delta_{j,0} \right) = 1 \, .
	\end{aligned}\end{equation}
	
	\subsection{Bessel functions weighted at zeros with $p=1$} \label{sec:bessel-weighted-at-zeros-p=1}
	
	Next, consider the expansion of
	\begin{equation}
		J_q (z) = \frac{ \sfrac{J_q (z)}{J_q (z)} }{ \sfrac{1}{J_q (z)} } = \frac{1}{ \sfrac{1}{J_q (z)} } \, .
	\end{equation}
	As we will show in the remainder of this section, the result is
	\begin{equation}
		J_q(z) = \frac{1}{ A^{(0,den)}(z) + \sum_{k=-\infty,k\ne0}^\infty \frac{a_{-1}^{(k,den)}}{z-z_k} }
	\end{equation} 
	where $z_k$ are the zeros of the Bessel function aside from $z_0=0$, the Laurent principal part coefficients for $|k|\ge1$ are given by Eq.~(\ref{eq:bessel-laurent-simple-poles-weight-at-zero}), and the Laurent principal part for the pole at $z_0=0$ is
	\begin{equation}
		\begin{matrix}
			A^{(0,den)}(z) = \left\{\begin{matrix}
				\vphantom{0} \\
				\vphantom{\sum_{n=1}^{q} \frac{a_{-n}^{(0,den)}}{z^n}}
			\end{matrix}\right.
		\end{matrix}
		\begin{matrix*}[l]
			0 \, , & q = 0 \\
			\sum_{n=1}^{q} \frac{a_{-n}^{(0,den)}}{z^n} \, , & q \ge 1
		\end{matrix*}
	\end{equation}
	where the coefficients $a_{-n}^{(0,den)}$ are calculated by solving the matrix equation Eq.~(\ref{eq:laurent-coeff-mat-eq-simple-appendix}) with the values given in Sec.~\ref{sec:bessel-laurent-weight-at-zeros-p=1}.
	
	\subsubsection{Calculating the Laurent series principal part} \label{sec:bessel-laurent-weight-at-zeros-p=1}
	
	To obtain the Laurent coefficients for the pole at $z_0 = 0$, we again solve the matrix equation Eq.~(\ref{eq:laurent-coeff-mat-eq-simple-appendix}) for $q\ge1$. To account for the different denominator, this time we take $j_0 = 0$, $j_* = q$, $c_j = \delta_{j,0}$, and $b_{q+2j} = \frac{(-1)^j}{2^{q+2j}} \frac{1}{j!(q+j)!}$ for $j\ge0$ with all other terms vanishing.
	
	All other poles are simple and have
	\begin{equation} \label{eq:bessel-laurent-simple-poles-weight-at-zero}
		a_{-1}^{(k,den)} = \left.\frac{1}{\frac{d}{dz}J_q(z)}\right|_{z=z_k}
		= \frac{2}{J_{q-1}(z_k)-J_{q+1}(z_k)} \, .
	\end{equation}
	
	\subsubsection{Calculating the analytic remainder} \label{sec:bessel-analytic-remainder-zeros-p1}
	
	Here we only need to calculate the analytic remainder of the denominator using integrals of the forms
	\begin{equation}
		I_{(S,m)}^{(den,j)} = \int_{\Gamma_m^{(S)}} dw \frac{1}{w^{j+1}} \frac{1}{J_q(w)} \, ,
	\end{equation}
	similar to the denominator integrals of Eq.~(\ref{eq:bessel-p-1-integral-prototypes}). First consider the right side of the contour. Taking $R_m = \frac{\pi}{2}\left(2m+q+\frac{5}{2}\right)$, we have
	\begin{equation}
		\cos\left(R_m+iy-\frac{\pi}{2}q-\frac{\pi}{4}\right) = -(-1)^m \cosh y \, .
	\end{equation}
	Using this in the asymptotic form of Eq.~(\ref{eq:bessel-asymptotic}) and again expanding in a geometric series, we have
	\begin{align}
		\left.\frac{1}{J_q(w)}\right|_{R_m+iy}
		& = -\sqrt{\frac{\pi \left(R_m+iy\right)}{2}} \frac{1}{-(-1)^m \cosh y+e^{\left|y\right|}O\left(|R_m+iy|^{-1}\right)} \nonumber \\
		& = -\sqrt{\frac{\pi \left(R_m+iy\right)}{2}} \frac{(-1)^m}{\cosh y}\left(1 + O\left(R_m^{-1}\right)\right) \, ,
	\end{align}
	This is identical to the expansion for $\frac{1}{\frac{d}{dz}J_q(z)}$ given by Eq.~(\ref{eq:bessel-geometric-series-right-side}).
	Likewise, for the top side of the contour, we obtain an expansion identical to Eq.~(\ref{eq:bessel-remainder-1-num-top}),
	\begin{equation}
		\left.\frac{1}{J_q(w)}\right|_{x+iR_m} = e^{-R_m} O\left(R_m^{1/2}\right)
	\end{equation}
	Hence, all analytic remainder integrals of $\sfrac{1}{J_q(z)}$ are equal to those of $\sfrac{1}{\frac{d}{dz}J_q(z)}$, and so their analytic remainders are equal. Since the analytic remainder of $\sfrac{1}{\frac{d}{dz}J_q(z)}$ is $0$, so too is the analytic remainder of $\sfrac{1}{J_q(z)}$.
	
	\subsection{Bessel functions weighted at zeros with $p=2$} \label{sec:bessel-weighted-at-zeros-p=2}
	
	Next we expand
	\begin{equation}
		J_q (z) = \frac{ \sfrac{J_q (z)}{\left(J_q (z)\right)^2} }{ \sfrac{1}{\left(J_q (z)\right)^2} } = \frac{ \sfrac{1}{J_q (z)} }{ \sfrac{1}{\left(J_q (z)\right)^2} } \, .
	\end{equation}
	The numerator in this case was the denominator in Sec.~\ref{sec:bessel-weighted-at-zeros-p=1}; hence, we can reuse the results of those calculations. The only new calculations are the Laurent expansion and analytic remainder for the denominator.
	
	The resulting expansion is
	\begin{equation} \label{eq:bessel-weighting-zeros-2-final-result}
		J_q(z) = \frac{A^{(0,num)}(z) + \sum_{k=-\infty, k\ne0}^\infty \frac{a_{-1}^{(k,num)}}{z-z_k}}
		{1 + A^{(0,den)}(z) + \sum_{k=-\infty,k\ne0}^\infty \left(\frac{a_{-2}^{(k,den)}}{\left(z-z_k\right)^2} + \frac{a_{-1}^{(k,den)}}{z-z_k}\right) }
	\end{equation}
	where $z_k$ are the zeros of the Bessel function aside from $z_0=0$, the Laurent principal part coefficients for $|k|\ge1$ are given by Eq.~(\ref{eq:bessel-laurent-zeros-p2}), and the Laurent principal part for $k=0$ is
	\begin{equation}
		\begin{matrix}
			A^{(0,num)}(z) = \left\{\begin{matrix}
				\vphantom{0} \\
				\vphantom{\sum_{n=1}^{q} \frac{a_{-n}^{(0,num)}}{z^n}}
			\end{matrix}\right. \vspace{1.5mm} \\
			A^{(0,den)}(z) = \left\{\begin{matrix}
				\vphantom{0} \\
				\vphantom{\sum_{n=1}^{2q} \frac{a_{-n}^{(0,den)}}{z^n}}
			\end{matrix}\right.
		\end{matrix}
		\begin{matrix*}[l]
			0 \, , & q = 0 \\
			\sum_{n=1}^{q} \frac{a_{-n}^{(0,num)}}{z^n} \, , & q \ge 1 \vspace{1.5mm} \\
			0 \, , & q = 0 \\
			\sum_{n=1}^{2q} \frac{a_{-n}^{(0,den)}}{z^n} \, , & q \ge 1
		\end{matrix*}
	\end{equation}
	where $a_{-n}^{(0,num)}$ are calculated by solving the matrix equation Eq.~(\ref{eq:laurent-coeff-mat-eq-simple-appendix}) with the values given in Sec.~\ref{sec:bessel-laurent-weight-at-zeros-p=1}, and $a_{-n}^{(0,den)}$ are calculated by solving the matrix equation Eq.~(\ref{eq:laurent-coeff-matrix-equation-p2}) with the values given in Sec.~\ref{sec:bessel-laurent-weight-at-zeros-p=2}.
	
	\subsubsection{Calculating the Laurent series principal part} \label{sec:bessel-laurent-weight-at-zeros-p=2}
	
	To obtain the Laurent coefficients for the denominator's pole at $z_0 = 0$, we solve the matrix equation Eq.~(\ref{eq:laurent-coeff-matrix-equation-p2}) for $q\ge1$, but this time with the parameters of Sec.~\ref{sec:bessel-laurent-weight-at-zeros-p=1}, taking $j_0 = 0$, $j_* = q$, $c_j = \delta_{j,0}$, and from the Bessel function power series, $b_{q+2j} = \frac{(-1)^j}{2^{q+2j}} \frac{1}{j!(q+j)!}$ for $j\ge0$ with all other terms vanishing.
	
	Again writing $\frac{d^n}{dz^n}f(z) = f^{(n)}(z)$ for brevity, all of the denominator's other poles have
	\begin{equation} \label{eq:bessel-laurent-zeros-p2}
			a_{-2}^{(k,den)}
			= \frac{1}{(J_q^{(1)}(z_k))^2} \, , \quad
			a_{-1}^{(k,den)}
			= - \frac{J_q^{(2)}(z_k)}{(J_q^{(1)}(z_k))^3} \, .
	\end{equation}
	
	\subsubsection{Calculating the analytic remainder}
	
	In Sec.~\ref{sec:bessel-analytic-remainder-zeros-p1}, we showed that by taking $R_m = \frac{\pi}{2}\left(2m+q+\frac{5}{2}\right)$, we obtained analytic remainder integrals that were identical to those seen previously in Sec.~\ref{sec:bessel-analytic-remainder-p2}, and hence $\sfrac{1}{J_q (z)}$ has the same analytic remainder as $\sfrac{1}{\frac{d}{dz} J_q (z)}$. For the same reason, $\sfrac{1}{\left(J_q (z)\right)^2}$ has the same analytic remainder as $\sfrac{1}{\left(\frac{d}{dz} J_q (z)\right)^2}$, which is $1$.
	
	\subsection{Bessel functions weighted at both extrema and zeros with $p=1$} \label{sec:bessel-weighted-at-extrema-and-zeros-p=1}
	
	Finally, we expand
	\begin{equation}
		J_q (z)
		= \frac{ \sfrac{J_q (z)}{J_q (z)\frac{d}{dz}J_q(z)} }{ \sfrac{1}{J_q (z)\frac{d}{dz}J_q(z)} }
		= \frac{ \sfrac{1}{\frac{d}{dz}J_q(z)} }{ \sfrac{1}{J_q (z)\frac{d}{dz}J_q(z)} } \, .
	\end{equation}
	The Laurent principal parts and analytic remainder for the numerator have already been calculated in Sec.~\ref{sec:bessel-p-1}, so we only need to calculate these for the denominator.
	
	The resulting expansion is
	\begin{equation}
		J_q(z) = \frac{A^{(0,num)}(z) + \sum_{k=-\infty, k\ne0}^\infty \frac{a_{-1}^{(k,num)}}{z-z_k}}
		{A^{(0,den)}(z) + \sum_{k=-\infty,k\ne0}^\infty \frac{a_{-1}^{(k,den)}}{z-z_k} }
	\end{equation}
	where $z_k$ are the extrema and zeros of the Bessel function aside from $z_0=0$, the Laurent principal part coefficients for $|k|\ge1$ are given by Eq.~(\ref{eq:laurent-coeff-extrema-and-zeros}), and the Laurent principal part for $k=0$ is
	\begin{equation}
		\begin{matrix}
			A^{(0,num)}(z) = \left\{\begin{matrix}
				\vphantom{-\frac{2}{z}} \\
				\vphantom{0} \\
				\vphantom{\sum_{n=1}^{q-1} \frac{a_{-n}^{(0,num)}}{z^n}}
			\end{matrix}\right. \vspace{1.5mm} \\
			A^{(0,den)}(z) = \left\{\begin{matrix}
				\vphantom{-\frac{2}{z}} \\
				\vphantom{\frac{4}{z}} \\
				\vphantom{\sum_{n=1}^{2q-1} \frac{a_{-n}^{(0,den)}}{z^n}}
			\end{matrix}\right.
		\end{matrix}
		\begin{matrix*}[l]
			-\frac{2}{z} \, , & q = 0 \\
			0 \, , & q = 1 \\
			\sum_{n=1}^{q-1} \frac{a_{-n}^{(0,num)}}{z^n} \, , & q \ge 2 \vspace{1.5mm} \\
			-\frac{2}{z} \, , & q = 0 \\
			\frac{4}{z} \, , & q = 1 \\
			\sum_{n=1}^{2q-1} \frac{a_{-n}^{(0,den)}}{z^n} \, & q \ge 2
		\end{matrix*}
	\end{equation}
	where $a_{-n}^{(0,num)}$ are calculated by solving the matrix equation Eq.~(\ref{eq:laurent-coeff-mat-eq-simple-appendix}) with the values given in Sec.~\ref{sec:bessel-laurent-p1}, and $a_{-n}^{(0,den)}$ are calculated by solving the matrix equation Eq.~(\ref{eq:laurent-coeff-matrix-equation-extrema-and-zeros}) with the values given in Sec.~\ref{sec:bessel-laurent-weight-at-extrema-and-zeros-p=1}.
	
	\subsubsection{Calculating the Laurent series principal part} \label{sec:bessel-laurent-weight-at-extrema-and-zeros-p=1}
	
	As described in Sec.~\ref{sec:bessel-laurent}, the matrix equation for the pole at $z_0 = 0$ is
	\begin{equation} \label{eq:laurent-coeff-matrix-equation-extrema-and-zeros}
		c = B_0 B_1 a
	\end{equation}
	and the vector and matrix elements are given by
	\begin{align}
		\begin{matrix*}
			\left[a\right]_j & = & a_{-(j_{*0} + j_{*1} - j_0) + j} \, , \\
			\left[B_d\right]_{j,j'} & = & b_{j_{*d} + j - j'}^{(d)} \Theta\left(j-j'\right) \, , \\
			\left[c\right]_j & = & c_{j_0 + j} \, ,
		\end{matrix*}
		\left.\vphantom{\begin{matrix*}
				\left[a\right]_j & = & a_{-(2j_* - j_0) + j} \, , \\
				\left[B_d\right]_{j,j'} & = & b_{j_{*d} + j - j'}^{(d)} \Theta\left(j-j'\right) \, , \\
				\left[c\right]_j & = & c_{j_0 + j} \, ,
		\end{matrix*}}\right\}
		\quad
		\begin{matrix*}
			0 \le j,j' < j_{*0} + j_{*1} - j_0 - 1 \\
			d = 0, 1
		\end{matrix*}
	\end{align}
	where $j_0=0$, $j_{*0}=q$, $j_{*1}=q-1$, $c_j=\delta_{j,0}$, $b_{q+2j}^{(0)} = \frac{(-1)^j}{2^{q+2j}} \frac{1}{j!(q+j)!}$, and $b_{q-1+2j}^{(1)} = \frac{(-1)^j}{2^{q+2j}} \frac{q+2j}{j!(q+j)!}$ for $j\ge0$ with all other $b$ terms vanishing.
	
	All other poles are simple and their Laurent coefficients depend on whether $z_k$ is an extremum or zero of $J_q(z)$,
	\begin{equation}\begin{aligned} \label{eq:laurent-coeff-extrema-and-zeros}
		a_{-1}^{(k,den)} & = \frac{1}{\left(J_q^{(1)}(z)\right)^2} \quad \text{if} \quad J_q(z_k) = 0 \, , \\
		a_{-1}^{(k,den)} & = \frac{1}{\left(J_q^{(0)}(z)\right)\left(J_q^{(2)}(z)\right)} \quad \text{if} \quad \frac{d}{dz}J_q(z_k) = 0 \, .
	\end{aligned}\end{equation}
	
	\subsubsection{Calculating the analytic remainder}
	
	Here the integrals we must calculate have the form
	\begin{equation}
		I_{(S,m)}^{(den,j)} = \int_{\Gamma_m^{(S)}} dw \frac{1}{w^{j+1}} \frac{1}{J_q(w)\frac{d}{dw}J_q(w)} \, .
	\end{equation}
	To evaluate these, we combine both asymptotic forms in Eq.~(\ref{eq:bessel-asymptotic}) to find
	\begin{align} \label{eq:bessel-extrema-and-zeros-asymptotic}
		& J_q(w)\frac{d}{dw}J_q(w) \\
		& \quad\quad\quad = - \frac{2}{\pi w} \left(2\sin\left(2\left(w-\frac{\pi}{2}q-\frac{\pi}{4}\right)\right) + e^{2\left|Im(w)\right|} O\left(\left|w|^{-1}\right|\right)\right) \, . \nonumber
	\end{align}
	and we take $R_m = \tfrac{\pi}{2}\left(m+q+1\right)$, for which
	\begin{equation}
		\sin\left(2\left(R_m+iy-\frac{\pi}{2}q-\frac{\pi}{4}\right)\right) = \left(-1\right)^m \cosh 2y \, .
	\end{equation}
	From these, we obtain
	\begin{equation}
		\left.\frac{1}{J_q(w)\frac{d}{dw}J_q(w)}\right|_{R_m+iy}
		= - \frac{\pi \left(R_m+iy\right)}{4} \frac{(-1)^m}{\cosh 2y} \left(1+O\left(R_m^{-1}\right)\right)
	\end{equation}
	
	First we calculate the integral for the right side of the contour using the triangle inequality.
	\begin{equation}\begin{aligned} \label{eq:bessel-remainder-ex-ze-den-right}
		\left|I_{(R,m)}^{(den,j)}\right|
		& = \left|\int_{\Gamma_m^{(R)}} dw \frac{1}{w^{j+1}} \frac{1}{J_q(w)\frac{d}{dw}J_q(w)}\right| \\
		& \le \frac{\pi}{4} R_m^{-j} \int_{-R_m}^{R_m} dy \frac{1}{\cosh 2y} \left(1 + O\left(R_m^{-1}\right)\right) \\
		& = \frac{\pi}{4} R_m^{-j} \left(\arctan\sinh 2R_m\right)\left(1 + O\left(R_m^{-1}\right)\right) \, ,
	\end{aligned}\end{equation}
	which vanishes in the limit $m\to\infty$ for $j\ge1$. For $j=0$, we note that $\sfrac{1}{J_q(w)\frac{d}{dw}J_q(w)}$ is odd while $j$ is even, so that by the parity argument of Prop.~\ref{prop:analytic-remainder-parity-argument}, $I_{(R,m)}^{(num,j)}$ cancels with $I_{(L,m)}^{(num,j)}$ so that this integral never contributes to the analytic remainder.
	
	For the top side of the contour, we refer to Eq.~(\ref{eq:bessel-extrema-and-zeros-asymptotic}) to obtain
	\begin{equation}
		\left.\frac{1}{J_q(w)\frac{d}{dw}J_q(w)}\right|_{x+iR_m} = e^{-2R_m} O\left(R_m\right) \, .
	\end{equation}
	Hence the integral for the top side of the contour is
	\begin{equation}\begin{aligned}
		\left|I_{(T,m)}^{(den,j)}\right|
		= \left|\int_{\Gamma_m^{(T)}} dw \frac{1}{w^{j+1}} \frac{1}{J_q(w)\frac{d}{dw}J_q(w)}\right|
		= e^{-2R_m} O\left(R_m^{-j+1}\right) \, ,
	\end{aligned}\end{equation}
	which vanishes as $m\to\infty$ for all $j$.
	
	Since all contributions to the contour integrals either vanish in the limit $m\to\infty$ or are zero by parity, the analytic remainder for $\sfrac{1}{J_q(z)\frac{d}{dz}J_q(z)}$ vanishes.
	
	\subsection{Bessel functions weighted at both extrema and zeros with $p=2$} \label{sec:bessel-weighted-at-extrema-and-zeros-p=2}
	
	The form
	\begin{equation}
		J_q (z)
		= \frac{ \sfrac{J_q (z)}{\left(J_q (z)\frac{d}{dz}J_q(z)\right)
				^2} }{ \sfrac{1}{\left(J_q (z)\frac{d}{dz}J_q(z)\right)^2} }
	\end{equation}
	is not suitable for barycentric expansion using the methods of this paper, as its analytic remainder integrals for the numerator and denominator do not converge in the limit $m\to\infty$, violating the conditions of Corollary~\ref{cor:full-expansion-f}.
	
\end{document}